\newtheorem{theorem}{Theorem}
\newtheorem{case}{Case}
\newtheorem{definition}[theorem]{Definition}
\newtheorem{example}[theorem]{Example}
\newtheorem{lemma}[theorem]{Lemma}
\newtheorem{proposition}[theorem]{Proposition}
\begin{document}

\title{On Bahadur Efficiency of Power Divergence Statistics}
\author{Peter Harremo\"{e}s,~\IEEEmembership{Member,~IEEE,} and~Igor
Vajda,~\IEEEmembership{Fellow,~IEEE}\thanks{Manuscript submitted February
2010.}\thanks{P. Harremo\"{e}s is with Copenhagen Business College,
Copenhagen, Denmark. I. Vajda is with Institute of Information Theory and
Automation, Prague, Czech Republic.}}
\maketitle

\begin{abstract}
It is proved that the information divergence statistic is infinitely more
Bahadur efficient than the power divergence statistics of the orders
$\alpha>1$ as long as the sequence of alternatives is contiguous with respect
to the sequence of null-hypotheses and the the number of observations per bin
increases to infinity is not very slow. This improves the former result in
Harremo\"{e}s and Vajda (2008) where the the sequence of null-hypotheses was
assumed to be uniform and the restrictions on on the numbers of observations
per bin were sharper. Moreover, this paper evaluates also the Bahadur
efficiency of the power divergence statistics of the remaining positive orders
$0\,<\alpha\leq1$. The statistics of these orders are mutually
Bahadur-comparable and all of them are more Bahadur efficient than the
statistics of the orders $\alpha>1.$ A detailed discussion of the technical
definitions and conditions is given, some unclear points are resolved, and the
results are illustrated by examples.

\end{abstract}

\begin{keywords}
Bahadur efficiency, consistency, power divergence, R\'{e}nyi divergence.
\end{keywords}

\section{Introduction}

\PARstart{P}{roblems} of detection, classification and identification are
often solved by the method of testing statistical hypotheses. Consider signals
$Y_{1},Y_{2},...,Y_{n}$\ collected \ from a random source independently at
time instants $i=1,2,...,n.$ Signal processing usually requires digitalization
based on appropriate quantization. Quantization of the signal space
$\mathcal{Y}$\ into $k$\ disjoint cells (or bins) $\mathcal{Y}_{n1}%
,\mathcal{Y}_{n2},...,\mathcal{Y}_{nk}$ reduces the signals $Y_{1}%
,Y_{2},...,Y_{n}$\ into simple $k$-valued indicators $I_{n}(Y_{1}),I_{n}%
(Y_{2}),...,I_{n}(Y_{n})$\ of their cover cells. Various hypotheses about the
data source represented by probability measures $Q_{n}$\ on $\mathcal{Y}$\ are
transformed by the quantization into discrete probability distributions
\[
Q_{n}=\left(  q_{n1}=Q(\mathcal{Y}_{n1}),...,q_{nk}=Q(\mathcal{Y}%
_{nk})\right)
\]
on the quantization cells where for no quantization cell $q_{nj}=0.$ These
hypothetical distributions need not be the same as the true distributions
$P_{n}=(p_{n1}=P(\mathcal{Y}_{n1}),...,p_{nk}=P(\mathcal{Y}_{nk}))$. The
latter distributions are usually unknown but, by the law of large numbers,
they can be approximated by the empirical distributions (vectors of relative
cell frequencies)%
\begin{equation}
\hat{P}_{n}=\left(  \hat{p}_{n1}=\frac{X_{n1}}{n},...,p_{nk}=\frac{X_{nk}}%
{n}\right)  =\frac{\mathbf{X}_{n}}{n}\label{empi}%
\end{equation}
where $X_{nj}$ is the numbers of the signals $Y_{1},Y_{2},...,Y_{n}$ in
$\mathcal{Y}_{nj}.$ Formally,
\begin{equation}
X_{nj}=%
{\textstyle\sum\limits_{i=1}^{n}}
1_{\left\{  Y_{i}\in\mathcal{Y}_{nj}\right\}  }=%
{\textstyle\sum\limits_{i=1}^{n}}
1_{\left\{  I_{n}(Y_{i})=j\right\}  },\text{ \ \ }1\leq j\leq k\label{af}%
\end{equation}
where $1_{A}$\ denotes the indicator of the event $A$. The problem is to
decide whether the signals $Y_{1},Y_{2},...,Y_{n}$\ are generated by the
source $(\mathcal{Y},Q)$ on the basis of the distributions $\hat{P}_{n},Q_{n}%
$. A classical method for solving this problem is the method of testing
statistical hypotheses in the spirit of Fisher, Neyman and Pearson. In our
case the hypothesis is
\begin{equation}
{\mathcal{H}}:P_{n}=Q_{n}\label{2}%
\end{equation}
and the decision is based either on the \emph{likelihood ratio statistic }%
\begin{equation}
\hat{T}_{1,n}=2%
{\textstyle\sum\limits_{j=1}^{k}}
X_{nj}\ln\frac{X_{nj}}{nq_{nj}}\label{lik}%
\end{equation}
or the \emph{Pearson }$\chi^{2}$\emph{-statistic}%
\begin{equation}
\hat{T}_{2,n}=%
{\textstyle\sum\limits_{j=1}^{k}}
\frac{(X_{nj}-nq_{nj})^{2}}{nq_{nj}}\label{Pears}%
\end{equation}
in the sense that the hypothesis is rejected when the statistic is large,
where "large" depends on the required decision error or risk \cite{Lehman2005}%
. $\medskip$

It is easy to see (c.f. (\ref{a}), (\ref{b}) below) that the classical test
statistics (\ref{lik}), (\ref{Pears}) are of the form%
\begin{equation}
\hat{T}_{\alpha,n}=2n\hat{D}_{\alpha,n}\overset{\text{def}}{=}2nD_{\alpha
}\left(  \hat{P}_{n},Q_{n}\right)  ,\quad\alpha\in\{1,2\}\label{clas}%
\end{equation}
where $D_{\alpha}\left(  P,Q\right)  $ for arbitrary $\alpha>0$\ and
distributions $P=(p_{1},...,p_{k})$, $Q=(q_{1},...,q_{k})$\ denotes the
divergence $D_{\phi_{\alpha}}\left(  P,Q\right)  $ of Csisz\'{a}r
\cite{Csiszar1963}$\ $for the power function%
\begin{equation}
\phi_{\alpha}(t)={\frac{t^{\alpha}-\alpha(t-1)-1}{\alpha(\alpha-1)}}\text{
\ when \ \ }\alpha\neq1\label{3b}%
\end{equation}
and
\begin{equation}
\phi_{1}(t)=\lim_{\alpha\rightarrow1}\phi_{\alpha}(t)=t\ln t-t+1.\label{3c}%
\end{equation}
The power divergences
\begin{equation}
D_{\alpha}\left(  P,Q\right)  =\frac{1}{\alpha(\alpha-1)}\left(
{\textstyle\sum\limits_{j=1}^{k}}
p_{j}^{\alpha}q_{j}^{1-\alpha}-1\right)  \text{ \ \ }\alpha\neq1\label{3a}%
\end{equation}
or the one-one related R\'{e}nyi divergences \cite{Renyi1961}%
\begin{equation}
D_{\alpha}\left(  P\Vert Q\right)  =\frac{1}{\alpha-1}\ln%
{\textstyle\sum\limits_{j=1}^{k}}
p_{j}^{\alpha}q_{j}^{1-\alpha}\text{ \ \ }\alpha\neq1\label{Re}%
\end{equation}
with the common information divergence limit
\begin{equation}
D_{1}\left(  P,Q\right)  =D_{1}\left(  P\Vert Q\right)  =%
{\textstyle\sum\limits_{j=1}^{k}}
p_{j}\ln\frac{p_{j}}{q_{j}}\label{3aa}%
\end{equation}
are often applied in various areas of information theory. In the present
context of detection and identification one can mention e.g. the work of
Kailath \cite{Kailath1967} who used the Bhattacharryya distance%
\[
B\left(  P,Q\right)  =-\ln%
{\textstyle\sum\limits_{j=1}^{k}}
\left(  p_{j}q_{j}\right)  ^{1/2}=\frac{1}{2}D_{1/2}\left(  P\Vert Q\right)
\]
which is one-one related to the Hellinger divergence. $\medskip$

In practical applications it is important to use the statistic $\hat
{D}_{\alpha_{\text{opt}},n}$ which is optimal in a sufficiently wide class of
divergence statistics $\hat{D}_{\alpha,n}$containing the standard statistical
proposals $\hat{D}_{1,n}$ and $\hat{D}_{2,n}$ appearing in (\ref{clas}). \ We
addressed this problem previously \cite{Harremoes2008, Harremoes2008d,
Harremoes2008e}. Our solution confirmed the classical statistical result of
Quine and Robinson \cite{Quine1985} who proved that the likelihood ratio
statistic $\hat{D}_{1,n}$\ is more efficient in the Bahadur sense than the
$\chi^{2}$-statistic $\hat{D}_{2,n}$ and extended the results of Beirlant et
al. \cite{Beirlant2001} and Gy\"{o}rfi et al. \cite{Gyorfi2000} dealing with
Bahadur efficiency of several selected power divergence \ statistics. Namely,
we evaluated the Bahadur efficiencies of the statistics $\hat{D}_{n\alpha}$ in
the domain $\alpha\geq1$ for the numbers $k=k_{n}$\ of quantization cells
slowly increasing with $n$ when the hypothetical distributions $Q_{n}$ are
uniform and the alternative distributions $P_{n}$\ are \textit{contiguous} in
the sense that $\lim_{n\rightarrow\infty}D_{\alpha}\left(  P_{n},Q_{n}\right)
$\ exists and \textit{identifiable} in the sense that this limit is positive.
We found that the Bahadur efficiencies decrease with the power parameter in
the whole domain $\alpha\geq1$. In the present paper we sharpen this result by
relaxing conditions on the rate of $k_{n}$\ and extend it considerably by
admitting non-uniform hypothetical distributions $Q_{n}$ and by evaluating the
Bahadur efficiencies also in the domain $0<\alpha\leq1.$

\section{Basic model}

Let $M(k)$ denote the set of all probability distributions $P=(p_{j}:1\leq
j\leq k)$ and%
\[
M(k|n)=\left\{  P\in M(k):nP\in\{0,1,\ldots\}^{k}\right\}
\]
its subset called the set of types in information theory. We consider
hypothetical distributions $Q_{n}=(q_{nj}:1\leq j\leq k)\in M(k)$\ restricted
by the condition $q_{nj}>0$\ and arbitrary alternative distributions
$P_{n}=(p_{nj}:1\leq j\leq k)\in M(k).$ The $\{0,1,\ldots\}^{k}$-valued
frequency counts $\mathbf{X}_{n}$ with coordinates introduced in (\ref{af})
are \textit{multinomially distributed} in the sense
\begin{equation}
\mathbf{X}_{n}\sim\mbox{Mult}_{k}(n,P_{n}),n=1,2,\ldots. \label{1}%
\end{equation}
Important components of the model are the empirical distributions $\widehat
{P}_{n}\in M(k|n)$\ defined by (\ref{empi}). Finally, for arbitrary $P\in
M(k)$\ and arbitrary $Q\in M(k)$\ with positive coordinates we consider the
power divergences (\ref{3a})-(\ref{3c}). For their properties we refer to
\cite{Liese1987, Liese2006, Read1988}. In particular, for the empirical and
hypothetical distributions $\hat{P}_{n},Q_{n}$\ we consider the power
divergence statistics $\widehat{D}_{\alpha,n}=D_{\alpha}\left(  \hat{P}%
_{n},Q_{n}\right)  $ (c.f. (\ref{clas}))defined by (\ref{3a}), (\ref{3aa}) for
all $\alpha>0$.

\begin{example}
\label{Example1 copy(1)}For $\alpha=2,$ $\alpha=1$\ and $\alpha=1/2$\ we get
the special power divergence statistics%
\begin{align}
\widehat{D}_{2,n}  &  ={\frac{1}{2}}\sum_{j=1}^{n}{\frac{(\widehat{p}%
_{nj}-q_{nj})^{2}}{q_{nj}}}={\frac{1}{2n}}\hat{T}_{2,n},\medskip\label{a}\\
\widehat{D}_{1,n}  &  =\sum_{j=1}^{n}\widehat{p}_{nj}\ln{\frac{\widehat
{p}_{nj}}{q_{nj}}}={\frac{1}{2n}}\hat{T}_{1,n},\medskip\label{b}\\
\widehat{D}_{1/2,n}  &  =2\sum_{j=1}^{n}\left(  \widehat{p}_{nj}^{1/2}%
-q_{nj}^{1/2}\right)  ^{2} \label{c}%
\end{align}
For testing the hypothesis $\mathcal{H}$\ of (\ref{2}) are usually used the
re-scaled versions
\begin{equation}
\widehat{T}_{\alpha,n}=2n\widehat{D}_{\alpha,n} \label{4}%
\end{equation}
distributed under $\mathcal{H}$\ asymptotically $\chi^{2}$ with $k-1$\ degrees
of freedom if $k$ is constant and asymptotically normally if $k=k_{n}$\ slowly
increases to infinity \cite[and references therein]{Morris1975, Gyorfi2002} .
The statistics (\ref{a}) and (\ref{b}) rescaled in this manner were already
mentioned in (\ref{Pears}) and (\ref{lik}). In (\ref{c}) is the Hellinger
divergence statistics rescaled by $2n$\ is known as \emph{Freeman--Tukey
statistic}
\begin{equation}
\widehat{T}_{1/2,n}=2n\widehat{D}_{1/2,n}=4\sum_{j=1}^{k}(\left(
X_{nj}\right)  ^{1/2}-\left(  nq_{nj}\right)  ^{1/2})^{2}. \label{FT}%
\end{equation}
\newline
\end{example}

\paragraph{Convention}

Unless the hypothesis ${\mathcal{H}}$\ is explicitly assumed, the random
variables, convergences and asymptotic relations are considered under the
alternative ${\mathcal{A}}$. Further, unless otherwise explicitly stated, the
asymptotic relations are considered for $n\longrightarrow\infty$ and the
symbols of the type%
\[
s_{n}\longrightarrow s\text{ \ \ and \ \ }s_{n}(\mathbf{X}_{n})\overset
{p}{\longrightarrow}s
\]
denote the ordinary numerical convergence and the stochastic convergence in
probability for $n\longrightarrow\infty$.$\medskip$

In this paper we consider the following assumptions.

$\medskip$

\begin{description}
\item[\textbf{A1:}] The number of cells $k=k_{n}\leq n$ of the distributions
from $M(k),\,M(k|n)$ depends on the sample size $n$ and increases to infinity.
In the rest of the paper the subscript $n$ is suppressed in the symbols
containing~$k$.

\item[\textbf{A2:}] The hypothetical distributions $Q_{n}=(q_{nj}>0:1\leq
j\leq k)$\ are regular in the sense that $\max_{j}q_{nj}\rightarrow0$ for
$n\rightarrow\infty$ and that there exists $\varrho>0$ such that%
\begin{equation}
q_{nj}>\frac{\varrho}{k}\text{ \ for all }1\leq j\leq k\text{\ and
}n=1,2,\ldots\text{ .} \label{5}%
\end{equation}

\item[\textbf{A3}$\alpha$\textbf{:}] The alternative ${\mathcal{A}}%
:(P_{n}:n=1,2,\ldots)$ \ is identifiable in the sense that there exits
$0<\Delta_{\alpha}<\infty$ such that
\begin{equation}
D_{\alpha,n}\overset{def}{=}D_{\alpha}(P_{n},Q_{n})\longrightarrow
\Delta_{\alpha}\text{\ \ under }{\mathcal{A}}. \label{6}%
\end{equation}

\end{description}

Under \textbf{A2}%
\begin{equation}
-\ln q_{nj}<\ln\frac{k}{\varrho}\text{ \ \ and \ \ }\ln^{2}q_{nj}<\ln^{2}%
\frac{k}{\varrho}. \label{qbounds}%
\end{equation}
Further, logical complement to the hypothesis ${\mathcal{H}}$\ is the
alternative denoted by ${\mathcal{A}}.$ By (\ref{2}), under ${\mathcal{A}}%
$\ the alternative distributions $P_{n}$\ differ from $Q_{n}$. Assumption
\textbf{A3}$\alpha$\ means that the alternative distributions are neither too
close to nor too distant from $Q_{n}$\ in the sense of $D_{\alpha}$-divergence
for given $\alpha>0$. Since for all $n=1,2,\ldots$
\[
D_{\alpha,n}=D_{\alpha}(Q_{n},Q_{n})\equiv0\ \ \text{so\ that }\Delta_{\alpha
}=0\text{\ \ under }{\mathcal{H}}%
\]
it is clear that the hypothesis${\mathcal{A}}$\ is under \textbf{A1},
\textbf{A2,}\thinspace\textbf{A3}$\alpha$\ distinguished from\ the hypothesis
${\mathcal{H}}$ by achieving a positive \mbox{$D_{\alpha }$-divergence} limit
$\Delta_{\alpha}$. In what follows we use the abbreviated notations
\begin{align}
\text{\textbf{A}}(\alpha)  &  =\left\{  \text{\textbf{A1},\textbf{A2}%
,\thinspace\textbf{A3}}\alpha\right\}  ,\label{6a}\\
\text{\textbf{A}}(\alpha_{1},\alpha_{2})  &  =\left\{  \text{\textbf{A1}%
,\textbf{A2},\thinspace\textbf{A3}}\alpha_{1}\text{, \textbf{A3}}\alpha
_{2}\right\}
\end{align}
for the combinations of assumptions.$\medskip$

\begin{definition}
\label{Def1}Under \textbf{A}($\alpha$) we say that the statistic $\widehat
{D}_{\alpha,n}$ is \emph{consistent} with parameter $\Delta_{\alpha}%
$\ appearing in \textbf{(}\ref{6}\textbf{) }if
\begin{equation}
\widehat{D}_{\alpha,n}\overset{p}{\longrightarrow}\Delta_{\alpha}\text{
\ \ under }{\mathcal{A}} \label{7}%
\end{equation}
and%
\begin{equation}
\widehat{D}_{\alpha,n}\overset{p}{\longrightarrow}0\text{ \ \ under
}{\mathcal{H}} \label{8}%
\end{equation}
i.e. if $\widehat{D}_{\alpha,n}\overset{p}{\longrightarrow}\Delta_{\alpha}%
$\ under both ${\mathcal{A}}$\ and ${\mathcal{H}}$. If (\ref{8}) is replaced
by the stronger condition that the\ expectation \textsf{E}$\widehat{D}%
_{\alpha,n}$\ tends\ to zero under ${\mathcal{H}}$, in symbols
\begin{equation}
\text{\textsf{E}}\left[  \left.  \widehat{D}_{\alpha,n}\right\vert
{\mathcal{H}}\right]  \longrightarrow0, \label{9}%
\end{equation}
then $\widehat{D}_{\alpha,n}$\ is said \emph{strongly consistent}.$\medskip$
\end{definition}

\begin{definition}
\label{Def2}We say that the statistic $\widehat{D}_{\alpha,n}$ is
\emph{Bahadur stable} if there is a continuous function with a Bahadur
relative function $\varrho_{\alpha}:$ $]0,\infty\lbrack^{2}$ $\rightarrow$
$]0,\infty\lbrack$ such that the probability of error function%
\begin{equation}
\mathsf{e}_{\alpha,n}(\Delta)=\mathsf{P}\left(  \left.  \widehat{D}_{\alpha
,n}>\Delta\right\vert {\mathcal{H}}\right)  ,\text{ \ \ }\Delta>0 \label{10}%
\end{equation}
corresponding to the test rejecting ${\mathcal{H}}$\ when $\widehat{D}%
_{\alpha,n}>\Delta$\ satisfies for all $\Delta_{1},\Delta_{2}>0$ the relation
\[
\frac{\ln\mathsf{e}_{\alpha,n}(\Delta_{1})}{\ln\mathsf{e}_{\alpha,n}%
(\Delta_{2})}\longrightarrow\varrho_{\alpha}\left(  \Delta_{1},\Delta
_{2}\right)  .
\]
If this condition holds then $\varrho_{\alpha}$\ is called the \emph{Bahadur
relative function}.
\end{definition}

Obviously, the Bahadur relative functions are multiplicative in the sense
\[
\varrho_{\alpha}\left(  \Delta_{1},\Delta_{2}\right)  \varrho_{\alpha}\left(
\Delta_{2},\Delta_{3}\right)  =\varrho_{\alpha}\left(  \Delta_{1},\Delta
_{3}\right)  .
\]
Statistics that are Bahadur stable have the nice property that the asymptotic
behavior of the error function $\mathsf{e}_{\alpha,n}(\Delta)$ is determined
by its behavior for just a single argument $\Delta^{\ast}>0.$ Indeed, if
$\widehat{D}_{\alpha,n}$ is Bahadur stable and if we define for a fixed
$\Delta^{\ast}>0$ the sequence%
\begin{equation}
c_{\alpha}^{\ast}(n)=-\frac{n}{\ln\mathsf{e}_{\alpha,n}(\Delta^{\ast})}
\label{11}%
\end{equation}
then for all $\Delta>0$%
\[
-{\frac{c_{\alpha}^{\ast}(n)}{n}}\ln\mathsf{e}_{\alpha,n}(\Delta
)\longrightarrow\varrho_{\alpha}\left(  \Delta,\Delta^{\ast}\right)  \text{
\ \ for all }\Delta>0.
\]
Moreover, if the expressions $\ -{c_{\alpha}(n)/n}\ln\mathsf{e}_{\alpha
,n}(\Delta)$\ \ converge for a sequence $c_{\alpha}\left(  n\right)  $\ then
the ratio $c_{\alpha}(n)/c_{\alpha}^{\ast}(n)$ tends to a constant.$\medskip$

\paragraph{Motivation of the next definition}

Suppose that condition \textbf{A(}$\alpha_{1},\alpha_{2}$\textbf{)} holds and
denote for each $\alpha\in\{\alpha_{1},\alpha_{2}\}$\ and $n=1,2,\ldots$\ by
$\Delta_{\alpha}+\varepsilon_{\alpha,n}$ the critical value of the statistics
$\widehat{D}_{\alpha_{i},n}$\ leading to the rejection of $\mathcal{H}$ with a
fixed power $0<\mathsf{p}<1$. In other words, let
\[
\mathsf{p}=\text{\textsf{P}}\left(  \widehat{D}_{\alpha,n}>\Delta_{\alpha
}+\varepsilon_{\alpha,n}\right)  \text{ \ \ for all }n=1,2,\ldots\text{ }%
\]
where the sequence $\varepsilon_{\alpha,n}=\varepsilon_{\alpha,n}(\mathsf{p}%
)$\ depends on the fixed $\mathsf{p}$. Since the assumed consistency of
$\widehat{D}_{\alpha,n}$\ implies that $\varepsilon_{\alpha,n}$ tends to
zero,\ the corresponding error probabilities $\mathsf{e}_{\alpha,n}%
(\Delta_{\alpha}+\varepsilon_{\alpha,n})$\ $=\mathsf{P}\left(  \left.
\widehat{D}_{\alpha,n}>\Delta_{\alpha}+\varepsilon_{\alpha,n}\right\vert
\mathcal{H}\right)  $ can be approximated by $\mathsf{e}_{\alpha,n}%
(\Delta_{\alpha})$\ $=\mathsf{P}\left(  \left.  \widehat{D}_{\alpha,n}%
>\Delta_{\alpha}\right\vert \mathcal{H}\right)  .$ By (\ref{12}),%
\[
-{\frac{c_{\alpha}(n)}{n}}\ln\mathsf{e}_{\alpha,n}(\Delta_{\alpha
})\longrightarrow g_{\alpha}(\Delta_{\alpha}).
\]
Hence the error $\mathsf{e}_{\alpha_{1},n}(\Delta_{\alpha_{1}})$ of the
statistic $\widehat{D}_{\alpha_{1},n}$\ tends to zero with the same
exponential rate as $\mathsf{e}_{\alpha_{2},m_{n}}(\Delta_{\alpha_{2}}%
)$\ achieved by $\widehat{D}_{\alpha_{2},m_{n}}$ for possibly different sample
sizes $m_{n}\neq n$\ with the property $m_{n}\longrightarrow\infty$\ if the
corresponding error exponents%
\begin{equation}
g_{\alpha_{1}}(\Delta_{\alpha_{1}})\frac{n}{c_{\alpha_{1}}(n)}\text{ \ \ and
\ \ }g_{\alpha_{2}}(\Delta_{\alpha_{2}})\frac{m_{n}}{c_{\alpha_{2}}(m_{n})}
\label{14}%
\end{equation}
tend to infinity with the same rate in the sense
\begin{equation}
\frac{m_{n}}{c_{\alpha_{2}}(m_{n})}=\frac{g_{\alpha_{1}}(\Delta_{\alpha_{1}}%
)}{g_{\alpha_{2}}(\Delta_{\alpha_{2}})}.\frac{n}{c_{\alpha_{1}}(n)}\left(
1+o(1)\right)  . \label{15}%
\end{equation}
The sample sizes $m_{n}$ and $n$ needed by the statistics $\widehat{D}%
_{\alpha_{2},n}$ and $\widehat{D}_{\alpha_{1},n}$\ to achieve the same rate of
convergence of errors are thus mutually related by the formula%
\begin{equation}
{\frac{m_{n}}{n}=}\text{ }\frac{g_{\alpha_{1}}(\Delta_{\alpha_{1}})}%
{g_{\alpha_{2}}(\Delta_{\alpha_{2}})}.\frac{c_{\alpha_{2}}(m_{n})}%
{c_{\alpha_{1}}(n)}\left(  1+o(1)\right)  . \label{16}%
\end{equation}

Obviously, the statistic $\widehat{D}_{\alpha_{1},n}$\ is asymptotically less
or more efficient than $\widehat{D}_{\alpha_{2},n}$\ if the ratio $m_{n}%
/n$\ of sample sizes needed to achieve the same rate of convergence of errors
to zero tends to a constant larger or smaller than $1$. This motivates the
following definition which refers to the typical convergent situation
\begin{equation}
{\frac{c_{\alpha_{2}}(m_{n})}{c_{\alpha_{1}}(n)}\longrightarrow}\text{
}c_{\alpha_{2}/\alpha_{1}}\text{ \ \ for some }0\leq c_{\alpha_{2}/\alpha_{1}%
}\leq\infty.\medskip\label{17}%
\end{equation}

\begin{definition}
\label{Def 2+}If there is a continuous function
\[
g_{\alpha}:\left]  0,\infty\right[  \mathcal{\ }{\mathcal{\rightarrow}}\left]
0,\infty\right[
\]
and a sequence $c_{\alpha}(n)$ such that for all $x>0$ the error function%
\begin{equation}
\mathsf{e}_{\alpha,n}(x)=\mathsf{P}\left(  \left.  D_{\alpha,n}>x\right\vert
\mathcal{H}\right)  ,\text{ \ \ }x>0 \label{12a}%
\end{equation}
satisfies for all $x>0$ the relation
\begin{equation}
-{\frac{c_{\alpha}(n)}{n}}\ln\mathsf{e}_{\alpha,n}(x)\longrightarrow
g_{\alpha}(x) \label{12}%
\end{equation}
then $g_{\alpha}$ is called \textit{Bahadur function} of the statistic
$D_{\alpha,n}$ generated by $c_{\alpha}(n)$. If (\ref{12}) is replaced by the
condition%
\begin{equation}
-{\frac{c_{\alpha}(n)}{n}}\ln\mathsf{e}_{\alpha,n}(x+\varepsilon
_{n})\longrightarrow g_{\alpha}(x)\text{ \ \ for arbitrary }\varepsilon
_{n}\longrightarrow0 \label{13}%
\end{equation}
then the function $g_{\alpha}$\ is \textit{strongly Bahadur}.$\medskip$
\end{definition}

\begin{definition}
\label{Def3}Let us assume that \textbf{A(}$\alpha_{1},\alpha_{2}$\textbf{)}
holds and that for each $\alpha\in\{\alpha_{1},\alpha_{2}\}$ the statistic
$\widehat{D}_{n,\alpha}$ is consistent with parameter $\Delta_{\alpha}$ and
has a Bahadur function $g_{\alpha}$ generated by a sequence $c_{\alpha}(n)$
such that (\ref{17}) is satisfied. Then the \emph{Bahadur efficiency} of
$\widehat{D}_{\alpha_{1},n}$ with respect to $\widehat{D}_{\alpha_{2},n}$ is
the number from the interval $[0,\infty]$\ defined by the formula
\begin{equation}
\mbox{BE}\left(  \widehat{D}_{\alpha_{1},n}\,;\text{ }\widehat{D}_{\alpha
_{2},n}\right)  ={\frac{g_{\alpha_{1}}(\Delta_{\alpha_{1}})}{g_{\alpha_{2}%
}(\Delta_{\alpha_{2}})}}.c_{\alpha_{2}/\alpha_{1}}. \label{18}%
\end{equation}
$\medskip$
\end{definition}

Hereafter we shall consider also the slightly modified concept of Bahadur
efficiency.$\medskip$

\begin{definition}
\label{Def3+}Let in addition to the assumptions of Definition \ref{Def3}, the
statistics $\widehat{D}_{\alpha_{1},n},$ $\widehat{D}_{\alpha_{2},n}$\ be
strongly consistent and the functions $g_{\alpha_{1}},$ $g_{\alpha_{2}}%
$\ strongly Bahadur. Then the Bahadur efficiency (\ref{18}) is said to be
\textit{Bahadur efficiency\ in the strong sense}.$\medskip$
\end{definition}

\paragraph{Motivation of Definition \ref{Def3+}}

Let the assumptions of this definition hold then for each $\alpha\in
\{\alpha_{1},\alpha_{2}\},and$ $u>0$ the function
\[
L_{\alpha,n}(u)=\text{\textsf{P}}\left(  \left.  {\widehat{T}}_{\alpha
,n}-\mathsf{E}\left[  \left.  {\widehat{T}}_{\alpha,n}\right\vert
{\mathcal{H}}\right]  \geq u\right\vert {\mathcal{H}}\right)  ,\ \text{(cf.
\ref{10})}%
\]
denotes the level of the error of the statistic
\[
{\widehat{T}}_{\alpha,n}-\mathsf{E}\left[  \left.  {\widehat{T}}_{\alpha
,n}\right\vert {\mathcal{H}}\right]  \equiv2n\left(  \widehat{D}_{\alpha
,n}-\mathsf{E}\left[  \left.  \widehat{D}_{\alpha,n}\right\vert {\mathcal{H}%
}\right]  \right)  \
\]
for critical value $u>0$. By the assumed strong consistency of $\widehat
{D}_{\alpha,n},$
\[
\frac{\text{\textsf{E}}\left[  \left.  \widehat{T}_{\alpha,n}\right\vert
{\mathcal{H}}\right]  }{2n}\longrightarrow0\ \ \ \ \ \mbox{(cf.(\ref{9}))}.
\]
This means that the sequence $c_{\alpha}(n)$ generating the strongly Bahadur
$g_{\alpha}$\ satisfies for all $t>0$\ the relation%
\begin{equation}
-{\frac{c_{\alpha}(n)}{n}}\ln\text{\textsf{P}}\left(  \left.  {\widehat{T}%
}_{\alpha,n}\geq\text{\textsf{E}}\left[  \left.  {\widehat{T}}_{\alpha
,n}\right\vert {\mathcal{H}}\right]  +2nt\right\vert {\mathcal{H}}\right)
\longrightarrow g_{\alpha}(t)\text{ \ \ .} \tag{cf. (\ref{13})}%
\end{equation}
Consequently, by the argument of Quine and Robinson \cite[p. 732]{Quine1985},
\[
\lim\nolimits_{n}-{\frac{c_{\alpha}(n)}{n}}\ln L_{\alpha,n}({\widehat{T}%
}_{\alpha,n})\overset{p}{\longrightarrow}g_{\alpha}(\Delta_{\alpha}).
\]
Hence \cite{Quine1985}, the error level $L_{\alpha_{1},n}({\widehat{T}%
}_{\alpha_{1},n})$ of the statistic ${\widehat{T}}_{\alpha_{1},n}%
=2n\widehat{D}_{\alpha_{1},n}$ is asymptotically equivalent to the error level
$L_{\alpha_{2},m_{n}}({\widehat{T}}_{\alpha_{2},m_{n}})$ of the statistic
${\widehat{T}}_{\alpha_{2},m_{n}}=2m_{n}\widehat{D}_{\alpha_{2},m_{n}}%
$\ achieved by a sample size $m_{n}$\ if the comparability (\ref{15}) takes
place or, in other words, if the sample sizes $n$\ and $m_{n}$\ are mutually
related by (\ref{16}). In other words, the concept of Bahadur efficiency
introduced in this paper coincides under the stronger assumptions of
Definition~\ref{Def3+} with the Bahadur efficiency of Quine and Robinson
\cite{Quine1985}.$\medskip$

Harremo\"{e}s and Vajda \cite{Harremoes2008} assumed the same strong
consistency as in Definition \ref{Def3+} but introduced the Bahadur efficiency
by the slightly different formula
\begin{equation}
\mbox{BE}\left(  \widehat{D}_{\alpha_{1},n}\,;\text{ }\widehat{D}_{\alpha
_{2},n}\right)  ={\frac{g_{\alpha_{1}}(\Delta_{\alpha_{1}})}{g_{\alpha_{2}%
}(\Delta_{\alpha_{2}})}}.\bar{c}_{\alpha_{2}/\alpha_{1}} \label{19}%
\end{equation}
where\footnote{Due to a missprint, $\alpha_{1}$ and $\alpha_{2}$\ were
interchanged behind the limit in \cite[Eq. 30]{Harremoes2008}, but the formula
was used in the correct form (\ref{19}). In the Appendix we prove that the
conclusions made on the basis of the original formula (\ref{19}) hold
unchanged under the present precised formula (\ref{18}).}%
\begin{equation}
\bar{c}_{\alpha_{2}/\alpha_{1}}=\lim_{n\longrightarrow\infty}\frac
{c_{\alpha_{2}}(n)}{c_{\alpha_{1}}(n)}. \label{20}%
\end{equation}

\section{Consistency}

In this section we study the consistency of the class of power divergence
statistics $D_{\alpha}(\widehat{P}_{n},Q_{n}),$ $\alpha>0.$ In the domain
$\alpha<0$ this consistency was studied in the particular case of uniform
$Q$\ by Harremo\"{e}s and Vajda \cite{Harremoes2008d}.$\medskip$

\begin{theorem}
\label{Theorem0}Let distributions $Q_{n}\in M\left(  k\right)  $ satisfy the
assumption \textbf{A}($\alpha$). Assume that $f$ is uniformly continuous. Then
the statistic $D_{f}(\widehat{P}_{n},Q_{n})$ is strongly consistent provided
\begin{equation}
\frac{n}{k}\longrightarrow\infty.\label{32}%
\end{equation}

\end{theorem}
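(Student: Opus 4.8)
The plan is to verify the two requirements of strong consistency from Definition~\ref{Def1}: the convergence (\ref{7}) under $\mathcal{A}$ and the vanishing of the null expectation (\ref{9}) under $\mathcal{H}$. Both will follow from one $L^{1}$ estimate. Writing $R_{nj}=\widehat{p}_{nj}/q_{nj}$ and $r_{nj}=p_{nj}/q_{nj}$, I would study the difference
\[
\Phi_{n}\overset{\text{def}}{=}D_{f}(\widehat{P}_{n},Q_{n})-D_{f}(P_{n},Q_{n})=\sum_{j=1}^{k}q_{nj}\bigl(f(R_{nj})-f(r_{nj})\bigr)
\]
and aim to prove $\mathsf{E}|\Phi_{n}|\to 0$. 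Under $\mathcal{A}$ this combines with the identifiability assumption \textbf{A3}$\alpha$, i.e. $D_{f}(P_{n},Q_{n})=D_{\alpha,n}\to\Delta_{\alpha}$, to give (\ref{7}); under $\mathcal{H}$ one has $r_{nj}=1$ and $D_{f}(P_{n},Q_{n})=f(1)=0$ (the usual normalization, valid for every $\phi_{\alpha}$), so the same estimate yields $\mathsf{E}[\,D_{f}(\widehat{P}_{n},Q_{n})\mid\mathcal{H}\,]=\mathsf{E}[\Phi_{n}\mid\mathcal{H}]\to 0$, which is (\ref{9}).

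The tool I would extract from uniform continuity is that the modulus of continuity $\omega_{f}(\delta)=\sup_{|x-y|\le\delta}|f(x)-f(y)|$ satisfies $\omega_{f}(\delta)\to 0$ as $\delta\to 0$ and, being subadditive, obeys a linear bound $\omega_{f}(t)\le a+bt$ for suitable constants $a,b$. Thus $|f(R_{nj})-f(r_{nj})|\le\omega_{f}(|R_{nj}-r_{nj}|)$ is simultaneously small for small fluctuations and controlled linearly for large ones. I would also record the elementary moments $\mathsf{E}R_{nj}=r_{nj}$ and $\mathrm{Var}(R_{nj})=p_{nj}(1-p_{nj})/(nq_{nj}^{2})$ coming from $X_{nj}\sim\mathrm{Bin}(n,p_{nj})$.

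Splitting each summand at the event $\{|R_{nj}-r_{nj}|<\delta\}$ — using $\omega_{f}(\delta)$ on it and the bound $a+b|R_{nj}-r_{nj}|$ off it — and then taking expectations, weighting by $q_{nj}$ and summing gives
\[
\mathsf{E}|\Phi_{n}|\le\omega_{f}(\delta)+\frac{a}{\delta^{2}}\sum_{j}q_{nj}\,\mathrm{Var}(R_{nj})+b\sum_{j}q_{nj}\,\mathsf{E}|R_{nj}-r_{nj}|.
\]
Here $\sum_{j}q_{nj}\mathrm{Var}(R_{nj})=\tfrac{1}{n}\sum_{j}p_{nj}(1-p_{nj})/q_{nj}\le\tfrac{1}{n}\sum_{j}r_{nj}\le k/(n\varrho)$ by the lower bound $q_{nj}>\varrho/k$ of \textbf{A2} together with $\sum_{j}p_{nj}=1$, while the factor $q_{nj}$ cancels in the last sum, so $\sum_{j}q_{nj}\mathsf{E}|R_{nj}-r_{nj}|\le\tfrac{1}{\sqrt{n}}\sum_{j}\sqrt{p_{nj}(1-p_{nj})}\le\sqrt{k/n}$ by Cauchy--Schwarz. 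Under (\ref{32}) both sums vanish, so letting $n\to\infty$ and then $\delta\to 0$ forces $\mathsf{E}|\Phi_{n}|\to 0$.

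The point that requires care — and where I expect the real work to lie — is squeezing the argument down to the stated hypothesis $n/k\to\infty$ rather than the cruder $n/k^{2}\to\infty$. A Chebyshev/union-bound treatment of the deviation term, or bounding $f$ through its linear growth via the second moment, would introduce a factor $\sum_{j}r_{nj}^{2}\le k^{2}/\varrho^{2}$ and thus cost $k^{2}/n$. The gain comes precisely from (i) the linear-modulus bound, which lets the dominant contribution be the \emph{first} absolute moment $\mathsf{E}|R_{nj}-r_{nj}|\le(\mathrm{Var}\,R_{nj})^{1/2}$, and (ii) the exact cancellation of $q_{nj}$ in $q_{nj}(\mathrm{Var}\,R_{nj})^{1/2}$ followed by $\sum_{j}\sqrt{p_{nj}}\le\sqrt{k}$. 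A pleasant by-product is that this automatically controls the ``heavy'' cells with large $r_{nj}$ under $\mathcal{A}$, so no separate hypothesis on the ratios $p_{nj}/q_{nj}$ is needed.
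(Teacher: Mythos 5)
Your proof is correct, and it reaches the paper's conclusion by a genuinely different mechanism, even though the overall skeleton is shared: like you, the paper proves an $L^{1}$ bound on $\Lambda_{\alpha,n}=D_{f}(\widehat{P}_{n},Q_{n})-D_{f}(P_{n},Q_{n})$, exploits the cancellation $q_{nj}\left\vert \widehat{p}_{nj}/q_{nj}-p_{nj}/q_{nj}\right\vert =\left\vert \widehat{p}_{nj}-p_{nj}\right\vert$, and reduces $\sum_{j}\mathsf{E}\left\vert \widehat{p}_{nj}-p_{nj}\right\vert$ to $(k/n)^{1/2}$ via Cauchy--Schwarz and $\mathsf{E}\left[  (\widehat{p}_{nj}-p_{nj})^{2}\right]  \leq p_{nj}/n$. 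The difference lies in how the irregularity of $f$ is tamed. The paper truncates in the \emph{domain} of $f$: it replaces $f$ below a level $s$ by its tangent at $s$, obtaining a function $f^{s}$ that is globally Lipschitz with constant $\lambda=\max\left\{  \left\vert f_{+}^{\prime}(s)\right\vert ,\left\vert f^{\prime}(\infty)\right\vert \right\}$ and satisfies $0\leq f-f^{s}\leq f(0)-f^{s}(0)$, whence $\mathsf{E}\left\vert \Lambda_{\alpha,n}\right\vert \leq2\left(  f(0)-f^{s}(0)\right)  +\lambda(k/n)^{1/2}$, and then sends $n\rightarrow\infty$ followed by $s\downarrow0$. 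This construction leans on convexity of $f=\phi_{\alpha}$ (monotone slopes give both the Lipschitz constant and the sign and monotonicity of $f-f^{s}$). Your split is instead in \emph{probability space}, and your tool --- subadditivity of the modulus $\omega_{f}$, giving $\omega_{f}(t)\leq a+bt$ --- uses only the uniform continuity actually stated in the theorem, so your argument is in that respect more faithful to the hypothesis and slightly more general (it needs no convexity). Two small remarks: your Chebyshev term is the only place where the lower bound $q_{nj}>\varrho/k$ of \textbf{A2} enters, and it is in fact dispensable, since subadditivity gives directly $\omega_{f}(t)\leq\omega_{f}(\delta)\left(  1+t/\delta\right)$ and hence $\mathsf{E}\left\vert \Phi_{n}\right\vert \leq\omega_{f}(\delta)\left(  1+\delta^{-1}(k/n)^{1/2}\right)$ with no event split at all (the paper's proof likewise never uses the $\varrho/k$ bound); and your closing diagnosis is exactly the point --- both proofs achieve the rate $n/k\rightarrow\infty$ of (\ref{32}) precisely because the dominant contribution is a first absolute moment rather than a second moment, which a cruder Chebyshev-only treatment would degrade to $n/k^{2}$.
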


\begin{proof}
Under $\mathcal{H}$ we have $D_{f}(P_{n},Q_{n})=D_{f}(Q_{n},Q_{n})=0.$ Hence
it suffices to prove
\begin{equation}
\left\vert \Lambda_{\alpha,n}\right\vert \overset{p}{\longrightarrow}0\text{
\ \ under both }\mathcal{H}\ \text{and }\mathcal{A}\label{35}%
\end{equation}
for $\Lambda_{\alpha,n}=D_{f}(\widehat{P}_{n},Q_{n})-D_{f}(P_{n},Q_{n})$. For
simplicity we skip the subscript $n$\ in the symbols $\widehat{P}_{n},P_{n},$
and $Q_{n}$, i.e. we substitute$\medskip$%
\begin{equation}
\widehat{P}_{n}=\widehat{P}=(\widehat{p}_{j}:1\leq j\leq k),\text{ \ \ }%
P_{n}=P=(p_{j}:1\leq j\leq k).\label{36}%
\end{equation}
This leads to the simplified formula $\Lambda_{\alpha,n}=D_{f}(\widehat
{P},Q)-D_{f}(P,Q).$ We can without loss of generality assume that $D_{f}(P,Q)$
is constant not only under $\mathcal{H}$\ (where the constant is automatically
$0$) but also under $\mathcal{A}$ (where the assumed detectability implies the
convergence $D_{f}(P,Q)\longrightarrow\Delta_{\alpha}$ for $0<\Delta_{\alpha
}<\infty$). In this asymptotic sense we use the equalities%
\begin{equation}
D_{f}(P,Q)=\frac{\sum q_{j}\left(  \frac{p_{j}}{q_{j}}\right)  ^{\alpha}%
-1}{\alpha\left(  \alpha-1\right)  }=\Delta_{\alpha}\label{37}%
\end{equation}
and%
\begin{equation}
\Lambda_{\alpha,n}=D_{f}(\widehat{P},U)-\Delta_{\alpha}\text{.}\label{38}%
\end{equation}
Choose some $0<s<1$ and define%
\[
f^{s}\left(  t\right)  =\left\{
\begin{array}
[c]{ll}%
f\left(  t\right)   & \text{for }t\geq s,\\
f\left(  s\right)  +f_{+}^{\prime}\left(  s\right)  \left(  t-s\right)   &
\text{for }0\leq t<s.
\end{array}
\right.
\]
Then
\[
0\leq f\left(  t\right)  -f^{s}\left(  t\right)  \leq f\left(  0\right)
-f^{s}\left(  0\right)
\]
so that (\ref{3a}) implies
\[
0\leq D_{f}\left(  P,Q\right)  -D_{f^{s}}\left(  P,Q\right)  \leq f\left(
0\right)  -f^{s}\left(  0\right)  .
\]
The function $f^{s}$ is Lipschitz with the Lipschitz constant $\lambda
=\max\left\{  \left\vert f_{+}^{\prime}\left(  s\right)  \right\vert
,\left\vert f^{\prime}\left(  \infty\right)  \right\vert \right\}  $ i.e.
$\left\vert f\left(  t_{1}\right)  -f\left(  t_{2}\right)  \right\vert
\leq\lambda\left\vert t_{1}-t_{2}\right\vert $ for all $t_{1},t_{2}\geq0.$
Then%
\begin{multline*}
\left\vert D_{f^{s}}(\widehat{P}_{n},Q)-D_{f^{s}}(P_{n},Q)\right\vert \\
=\left\vert \sum_{j=1}^{k}q_{j}\,f^{s}\left(  {\frac{\widehat{p}_{j}}{q_{j}}%
}\right)  -\sum_{j=1}^{k}\,q_{j}f^{s}\left(  {\frac{p_{j}}{q_{j}}}\right)
\right\vert \\
=\sum_{j=1}^{k}q_{j}\,\left\vert f^{s}\left(  {\frac{\widehat{p}_{j}}{q_{j}}%
}\right)  -f^{s}\left(  {\frac{p_{j}}{q_{j}}}\right)  \right\vert \leq
\sum_{j=1}^{k}q_{j}\lambda\,\left\vert {\frac{\widehat{p}_{j}}{q_{j}}}%
-{\frac{p_{j}}{q_{j}}}\right\vert \\
=\lambda\sum_{j=1}^{k}\,\left\vert \widehat{p}_{j}-p_{j}\right\vert
\leq\lambda\left(  \sum_{j=1}^{k}\frac{\left(  \widehat{p}_{j}-p_{j}\right)
^{2}}{p_{j}}\right)  ^{1/2}%
\end{multline*}
where in the last step we used the Schwarz inequality. Since
\begin{equation}
\mathsf{E}\left[  \left(  \widehat{p}_{j}-p_{j}\right)  ^{2}\right]
=p_{j}(1-p_{j})/n\leq\ p_{j}/n\label{45a}%
\end{equation}
it holds%
\begin{multline*}
\mathsf{E}\left\vert D_{f^{s}}(\widehat{P}_{n},Q)-D_{f^{s}}(P_{n}%
,Q)\right\vert \\
\leq\lambda\left(  \mathsf{E}\left[  \sum_{j=1}^{k}\frac{\left(  \widehat
{p}_{j}-p_{j}\right)  ^{2}}{p_{j}}\right]  \right)  ^{1/2}\leq\lambda\left(
\frac{k}{n}\right)  ^{1/2}.
\end{multline*}
Consequently,
\begin{multline*}
\mathsf{E}\left\vert D_{f}(\widehat{P}_{n},Q)-D_{f}(P_{n},Q)\right\vert \\
\leq2\left(  f\left(  0\right)  -f^{s}\left(  0\right)  \right)
+\lambda\left(  k/n\right)  ^{1/2}%
\end{multline*}
so that under (\ref{33})
\begin{multline*}
\underset{n\rightarrow\infty}{\lim\sup\text{ }}\mathsf{E}\left\vert
D_{f}(\widehat{P}_{n},Q_{n})-D_{f}(P_{n},Q_{n})\right\vert \\
\leq2\left(  f\left(  0\right)  -f^{s}\left(  0\right)  \right)  .
\end{multline*}
This holds for all $s>0.$ Since $f\left(  0\right)  -f^{s}\left(  0\right)
\longrightarrow0$ for $s\downarrow0,$ we see that in this case (\ref{32})
implies (\ref{35}).
\end{proof}

The interpretation of condition \ref{32} is that the mean number of
observations per bin should tend to infinity under $\mathcal{H}$. Note that
this condition does not exclude that we will observe empty cells.

Our results are concentrated in Theorem \ref{Theorem1} below. Its proof uses
the following auxiliary result.$\medskip$

\begin{lemma}
\label{Lemma1}For $x,y\geq0$ and $1\leq\alpha\leq2$ it holds%
\begin{equation}
L_{\alpha}\left(  x,y\right)  \leq\phi_{\alpha}(y)-\phi_{\alpha}(x)\leq
U_{\alpha}\left(  x,y\right)  \label{111}%
\end{equation}
\newline where%
\begin{equation}
L_{\alpha}\left(  x,y\right)  =(y-x)\phi_{\alpha}^{\prime}(x) \label{L}%
\end{equation}
and%
\begin{equation}
U_{\alpha}\left(  x,y\right)  =L_{\alpha}\left(  x,y\right)  +\frac{1}{\alpha
}x^{\alpha-2}\left(  y-x\right)  ^{2}.\medskip\label{1111}%
\end{equation}
\newline
\end{lemma}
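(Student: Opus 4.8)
The plan is to work directly with the generator $\phi_\alpha$, whose first two derivatives are $\phi_\alpha'(t) = (t^{\alpha-1}-1)/(\alpha-1)$ and $\phi_\alpha''(t) = t^{\alpha-2}$ for $\alpha\neq1$ (with the corresponding logarithmic expressions at $\alpha=1$). Since $1\le\alpha\le2$ forces $\alpha-2\le0$, the second derivative $t^{\alpha-2}$ is nonnegative on $]0,\infty[$, so $\phi_\alpha$ is convex. The lower bound in (\ref{111}) is then nothing but the tangent-line inequality $\phi_\alpha(y)\ge\phi_\alpha(x)+(y-x)\phi_\alpha'(x)$, immediate from convexity; this disposes of $L_\alpha(x,y)\le\phi_\alpha(y)-\phi_\alpha(x)$ with essentially no work.

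For the upper bound I would introduce, for fixed $x\ge0$, the gap function $g(y)=U_\alpha(x,y)-\bigl(\phi_\alpha(y)-\phi_\alpha(x)\bigr)$ and show $g\ge0$ on $[0,\infty)$. A short computation gives three anchoring identities: $g(x)=0$ and $g'(x)=0$ (because $U_\alpha$ matches $\phi_\alpha(y)-\phi_\alpha(x)$ to first order at $y=x$), and---the one genuinely algebraic step---$g(0)=0$, which is exactly where the coefficient $1/\alpha$, rather than the $1/2$ suggested by a crude Taylor estimate, is forced. Indeed, replacing $1/\alpha$ by a constant $c$ gives $g(0)=(c-1/\alpha)x^\alpha$, so $c=1/\alpha$ is the smallest admissible value. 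The second derivative is $g''(y)=\tfrac{2}{\alpha}x^{\alpha-2}-y^{\alpha-2}$, and since $y^{\alpha-2}$ is nonincreasing it changes sign exactly once: $g''<0$ for $y<y_0$ and $g''>0$ for $y>y_0$, where $y_0=x(\alpha/2)^{1/(2-\alpha)}\le x$ (using $\alpha/2\le1$).

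With this concave/convex dichotomy the conclusion splits into two ranges. On $[y_0,\infty)$ the function $g$ is convex and has its stationary point $x$ in this range with $g(x)=0$, so $x$ is the global minimum there and $g\ge0$; in particular $g(y_0)\ge0$. On $[0,y_0]$ the function $g$ is concave, hence lies above the chord joining its endpoint values $g(0)=0$ and $g(y_0)\ge0$; that chord is nonnegative, so $g\ge0$ on $[0,y_0]$ as well. Combining the two ranges yields $g\ge0$ everywhere, which is the upper bound. The cases $\alpha=2$ (where the upper inequality is an identity, $g\equiv0$) and $\alpha=1$ (logarithmic generator, $y_0=x/2$) fit the same scheme and can be checked directly or by continuity in $\alpha$.

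The main obstacle is the regime $y<x$. There the natural second-order remainder $\tfrac12(y-x)^2\phi_\alpha''(\xi)$ with $\xi\in(y,x)$ cannot be bounded by $\tfrac{1}{\alpha}x^{\alpha-2}(y-x)^2$, because $\phi_\alpha''(\xi)=\xi^{\alpha-2}$ grows without bound as $\xi$ decreases, so a purely local comparison fails. The remedy is precisely the global sign analysis of $g''$ together with the exact identity $g(0)=0$: the former supplies concavity on $[0,y_0]$, and the latter makes the chord argument close while simultaneously pinning down $1/\alpha$ as the sharp constant.
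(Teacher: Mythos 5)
Your proof is correct and is essentially the paper's own argument up to sign: your gap function $g$ is the negative of the paper's $f$, with the same anchoring identities $g(0)=g(x)=g'(x)=0$, the same single inflection point $x\left(\alpha/2\right)^{1/(2-\alpha)}<x$, the same stationary-point argument on the side containing $x$ and the same chord argument on $[0,\,x(\alpha/2)^{1/(2-\alpha)}]$ anchored at the exact identity at $y=0$, and the same treatment of $\alpha=1$ and $\alpha=2$ by continuity. Your remarks on the sharpness of the constant $1/\alpha$ and on why the local Taylor remainder bound fails for $y<x$ go beyond what the paper states, but the core argument coincides.
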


\begin{proof}
First assume $1<\alpha<2.$ Since $\frac{1}{\alpha}x^{\alpha-2}\left(
y-x\right)  ^{2}$\ is nonnegative, it suffices to prove
\begin{equation}
\phi_{\alpha}(y)\geq\phi_{\alpha}(x)+\phi_{\alpha}^{\prime}(x)\left(
y-x\right)  \label{30ny}%
\end{equation}
and
\begin{equation}
\phi_{\alpha}(y)\leq\phi_{\alpha}(x)+\phi_{\alpha}^{\prime}(x)\left(
y-x\right)  +\frac{1}{\alpha}x^{\alpha-2}\left(  y-x\right)  ^{2}.
\label{31ny}%
\end{equation}
But Inequality (\ref{30ny}) is evident since the function $y\rightarrow
\phi_{\alpha}(y)$ is convex. We shall prove that the function%
\begin{multline*}
f\left(  y\right)  =\\
\phi_{\alpha}\left(  y\right)  -\left(  \phi_{\alpha}\left(  x\right)
+\phi_{\alpha}^{\prime}(x)\left(  y-x\right)  +\frac{1}{\alpha}x^{\alpha
-2}\left(  y-x\right)  ^{2}\right)
\end{multline*}
is non-positive. First we observe that $f(0)=f(x)=0$. By differentiating
$f\left(  y\right)  $\ we get%
\[
f^{\prime}\left(  y\right)  ={\frac{y^{\alpha-1}-1}{\alpha-1}-}\left(
\phi_{\alpha}^{\prime}(x)+\frac{2}{\alpha}x^{\alpha-2}\left(  y-x\right)
\right)
\]
so that $f^{\prime}\left(  x\right)  =0.$\ Differentiating once more we get%
\[
f^{\prime\prime}\left(  y\right)  =y^{\alpha-2}{-}\frac{2}{\alpha}x^{\alpha
-2}.
\]
Thus $f^{\prime\prime}(y)>0$ for $y<x_{\alpha}\overset{def}{=}\left(
\alpha/2\right)  ^{\frac{1}{2-\alpha}}x$ and $f^{\prime\prime}(y)<0$ for
$y>x_{\alpha}.$ Since $x_{\alpha}<x$ and $f(y)$ is concave on $[x_{\alpha}%
,1]$, it is maximized on this interval at $y=x$ where $f(x)=0$. Thus $f\left(
y\right)  \leq0$ on this interval and in particular $f(x_{\alpha})\leq0$. This
together with $f(0)=0$ and the convexity of $f\left(  y\right)  $\ on the
interval $[0,$ $x_{\alpha}]$ implies $f\left(  y\right)  \leq0$ for
$y\in\left[  0,x\right]  $. This completes the proof of the non-positivity of
$f\left(  y\right)  $, i.e. the proof of (\ref{31ny}). The cases $\alpha=2$
and $\alpha=1$\ follow by continuity.$\medskip$
\end{proof}

The main result of this section is the following theorem.$\medskip$

\begin{theorem}
\label{Theorem1}Let distributions $Q_{n}\in M\left(  k\right)  $ satisfy the
assumption \textbf{A}($\alpha$). Then the statistic $D_{\alpha}(\widehat
{P}_{n},Q_{n})$ is strongly consistent provided%
\begin{equation}
\rule{-7mm}{0mm}0<\alpha\leq2\text{ \ \ \ \ and \ \ \ \ }{\frac{n}{k}%
}\longrightarrow\infty\vspace*{-3mm} \label{33}%
\end{equation}
$\medskip$and consistent provided%
\begin{equation}
\rule{2mm}{0mm}\alpha>2\rule{10mm}{0mm}\text{\ \ \ and\ \ \ \ \ }{\frac
{n}{k\log k}}\longrightarrow\infty.\medskip\label{34}%
\end{equation}

\end{theorem}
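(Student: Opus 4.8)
The plan is to verify the two defining conditions of strong consistency from Definition~\ref{Def1}: the convergence $\widehat{D}_{\alpha,n}\overset{p}{\to}\Delta_\alpha$ under $\mathcal A$, i.e.\ (\ref{7}), and $\mathsf E[\widehat D_{\alpha,n}\mid\mathcal H]\to0$, i.e.\ (\ref{9}) (which yields (\ref{8}) by Markov's inequality). Exactly as in the proof of Theorem~\ref{Theorem0}, everything reduces to the increment $\Lambda_{\alpha,n}=D_\alpha(\widehat P_n,Q_n)-D_\alpha(P_n,Q_n)$, since $D_\alpha(P_n,Q_n)\to\Delta_\alpha$ under $\mathcal A$ and vanishes under $\mathcal H$. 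In the range $0<\alpha<1$ the generating function $\phi_\alpha$ is the sum of the H\"older map $t\mapsto t^\alpha$ and an affine function, hence uniformly continuous on $[0,\infty)$, so Theorem~\ref{Theorem0} already delivers strong consistency whenever $n/k\to\infty$ and nothing further is needed in this range.

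The substance of the statement is $1\le\alpha\le2$, where $\phi_\alpha$ ceases to be uniformly continuous and Lemma~\ref{Lemma1} supplies the correct replacement. Putting $x_j=p_{nj}/q_{nj}$ and $y_j=\widehat p_{nj}/q_{nj}$ and using $q_{nj}(y_j-x_j)=\widehat p_{nj}-p_{nj}$, the two-sided bound (\ref{111}) gives $L_n\le\Lambda_{\alpha,n}\le L_n+Q_n$, with linear part $L_n=\sum_j(\widehat p_{nj}-p_{nj})\phi_\alpha'(x_j)$ and nonnegative remainder $Q_n=\tfrac1\alpha\sum_j p_{nj}^{\alpha-2}q_{nj}^{1-\alpha}(\widehat p_{nj}-p_{nj})^2$. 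Under $\mathcal H$ one has $x_j\equiv1$ and $\phi_\alpha'(1)=0$, so $L_n=0$ and $\widehat D_{\alpha,n}=\Lambda_{\alpha,n}\le\tfrac1\alpha\sum_j(\widehat p_{nj}-q_{nj})^2/q_{nj}$; taking expectations and using (\ref{45a}) gives $\mathsf E[\widehat D_{\alpha,n}\mid\mathcal H]\le(k-1)/(\alpha n)\to0$, which is (\ref{9}) and hence (\ref{8}). Under $\mathcal A$, Jensen's inequality for the concave map $r\mapsto r^{\alpha-1}$ together with $\sum_j q_{nj}x_j=1$ yields $\mathsf E Q_n\le\tfrac1{\alpha n}\sum_j q_{nj}x_j^{\alpha-1}\le\tfrac1{\alpha n}$, so $Q_n\to0$ in $L^1$. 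Finally $\mathsf E L_n=0$, and the multinomial covariance structure gives $\operatorname{Var}L_n\le\tfrac1n\sum_j p_{nj}\phi_\alpha'(x_j)^2$; bounding $x_j\le k/\varrho$ by \textbf{A2} and using that $\sum_j q_{nj}x_j^{\alpha}$ stays bounded by \textbf{A3}$\alpha$ shows $\operatorname{Var}L_n=O(k^{\alpha-1}/n)$, which vanishes once $n/k\to\infty$ (the borderline case $\alpha=1$ is identical after replacing $\phi_\alpha'(x_j)$ by $\ln x_j$, whose weighted second moment is $O((\log k)^2)$ by (\ref{qbounds})). Chebyshev's inequality then gives $L_n\overset{p}{\to}0$, hence $\Lambda_{\alpha,n}\overset{p}{\to}0$ and (\ref{7}).

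For $\alpha>2$ the situation changes qualitatively: $\phi_\alpha''(t)=t^{\alpha-2}$ is now increasing, so the upper estimate $U_\alpha$ of Lemma~\ref{Lemma1}, whose quadratic remainder is anchored at the deterministic point $x_j$, is no longer valid; only the convexity lower bound $L_n\le\Lambda_{\alpha,n}$ survives, and even the variance bound $\operatorname{Var}L_n=O(k^{\alpha-1}/n)$ is too weak to vanish under $n/(k\log k)\to\infty$. Accordingly the theorem asserts only ordinary consistency here, and the plan is to prove $\Lambda_{\alpha,n}\overset{p}{\to}0$ directly through a concentration argument for the self-normalized sum $\sum_j\widehat p_{nj}^{\,\alpha}q_{nj}^{1-\alpha}$: truncate the cell counts $X_{nj}$ at a threshold of order $\log k$, estimate the expected contribution of the truncated variables by a Bernstein/Chernoff bound, and control the discarded upper-deviation events by a union bound over the $k$ cells. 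The truncation level $\sim\log k$ is precisely what degrades the requirement from $n/k\to\infty$ to $n/(k\log k)\to\infty$. Reconciling the non-polynomial growth of $\phi_\alpha$ for non-integer $\alpha>2$ with the smallest admissible cells $q_{nj}\sim\varrho/k$ — so that the rare cells receiving anomalously many observations cannot spoil convergence — is the step I expect to be the main obstacle.
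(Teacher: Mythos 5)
Your handling of the range $0<\alpha\le2$ is sound and follows essentially the paper's route: $0<\alpha<1$ is delegated to Theorem~\ref{Theorem0} via uniform continuity of $\phi_\alpha$, and $1\le\alpha\le2$ uses the two-sided bound of Lemma~\ref{Lemma1} to split $\Lambda_{\alpha,n}$ into the centered linear part $L_n$ and the nonnegative remainder $Q_n$; your variance estimate $\operatorname{Var}L_n\le\frac1n\sum_jp_{nj}\phi_\alpha'(p_{nj}/q_{nj})^2=O(k^{\alpha-1}/n)$ (with the $O(\ln^2k/n)$ analogue at $\alpha=1$) is exactly the paper's second-moment computation, only finished with Chebyshev instead of Cauchy--Schwarz, and your direct bound $\mathsf{E}[\widehat D_{\alpha,n}\mid\mathcal{H}]\le(k-1)/(\alpha n)$ is tidier than the paper's, which instead proves $\mathsf{E}|\Lambda_{\alpha,n}|\to0$ under both hypotheses at once. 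Two local flaws: (i) in bounding $\mathsf{E}Q_n$ you wrote $p_{nj}^{\alpha-2}q_{nj}^{1-\alpha}\cdot p_{nj}=q_{nj}x_j^{\alpha-1}$, but this product is $x_j^{\alpha-1}$ with no weight $q_{nj}$, so your Jensen step and the conclusion $\mathsf{E}Q_n\le1/(\alpha n)$ are wrong as stated; the repair is immediate --- $\sum_jx_j^{\alpha-1}\le(k/\varrho)\sum_jq_{nj}x_j^{\alpha-1}\le k/\varrho$ by \textbf{A2} combined with your Jensen argument --- giving $\mathsf{E}Q_n=O(k/n)$, which matches the paper's bound $\frac1{\alpha\varrho^{\alpha-1}}\frac kn$ and still suffices under (\ref{33}); (ii) at $\alpha=1$ the claim $\sum_jp_{nj}\ln^2p_{nj}=O(\ln^2k)$ does not follow from (\ref{qbounds}), which controls only the $q$-terms; the paper proves it by the majorization method of \cite{Harremoes2001h} (an elementary split at $p_{nj}=k^{-2}$ also works), so this step needs an argument, not a citation of (\ref{qbounds}).

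The genuine gap is the second half of the theorem. For $\alpha>2$ you offer a plan --- truncation of counts at level $\sim\log k$, Bernstein/Chernoff, union bound --- and you yourself flag its hardest step as unresolved, so consistency under (\ref{34}) is simply not proved in your text. The paper's argument is close in spirit but concretely different, and it evades the obstacle you anticipate: it Taylor-expands $\widehat p_{nj}^{\,\alpha}$ around $p_{nj}$ with a random intermediate point $\xi_j$ as in (\ref{50}), and truncates \emph{multiplicatively} rather than at an absolute level, via the events $E_{nj}(b)=\{\widehat p_{nj}\ge b\max\{p_{nj},q_{nj}\}\}$ for fixed $b>1$. A Poisson--Chernoff bound gives per-cell probability at most $\exp\{-nq_{nj}\phi_1(b)\}\le\exp\{-n\varrho\phi_1(b)/k\}$, so the union bound over $k$ cells yields $k^{1-\frac{n}{k\log k}\varrho\phi_1(b)}\to0$: the $\log k$ in (\ref{34}) comes from the union bound against cells of mean $nq_{nj}\gtrsim n/k$, not from a $\log k$ truncation height. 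On the complement, $\xi_j^{\alpha-2}\le b^{\alpha-2}\left(p_{nj}^{\alpha-2}+q_{nj}^{\alpha-2}\right)$ anchors the quadratic remainder, and the moments close using $\sum_jp_{nj}^{\alpha}=O(k^{1-\alpha})$, i.e.\ inequality (\ref{pialpha}) --- a consequence of \textbf{A3}$\alpha$ that your sketch never exploits and which dissolves your worry about non-integer $\alpha$ and cells of size $\varrho/k$. (Note, though, that the paper's step (\ref{54}) replaces $q_{nj}^{\alpha-2}$ by $(\varrho/k)^{\alpha-2}$, which for $\alpha>2$ uses the lower bound (\ref{5}) in the wrong direction and really requires $q_{nj}=O(1/k)$.) As submitted, your proposal establishes only the $0<\alpha\le2$ part of the statement.
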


\begin{proof}
$\medskip$We shall use the same notation as in the proof of Theorem
\ref{Theorem0}. In the proof we treat separately the
cases\renewcommand{\thecase}{\roman{case}}%
\begin{align*}
\mathbf{\ref{Caseii}}  &  :0<\alpha<1,\quad\text{ }\\
\mathbf{\ref{Caseiv}}  &  :1<\alpha\leq2,\quad\text{ }\\
\mathbf{\ref{Caseiii}}  &  :\alpha=1,\\
\mathbf{\ref{Casev}}  &  :\alpha>2.
\end{align*}

\begin{case}
[$0<\alpha<1$]\label{Caseii}This follows from Theorem \ref{Theorem0} because
$x\rightarrow\phi_{\alpha}\left(  x\right)  $ is uniformly continuous.
\end{case}

\begin{case}
[$1<\alpha\leq2$]\label{Caseiv}Here we get from (\ref{38})%
\begin{equation}
\Lambda_{\alpha,n}=\sum_{j=1}^{k}q_{j}\left(  \phi_{\alpha}\left(
\frac{\widehat{p}_{j}}{q_{j}}\right)  -\phi_{\alpha}\left(  \frac{p_{j}}%
{q_{j}}\right)  \right)  \label{lambda}%
\end{equation}
so that Lemma \ref{Lemma1} implies%
\begin{multline*}
\sum_{j=1}^{k}q_{j}L_{\alpha}\left(  \frac{p_{j}}{q_{j}},\frac{\widehat{p}%
_{j}}{q_{j}}\right)  \leq\Lambda_{\alpha,n}\leq\\
\\
\sum_{j=1}^{k}q_{j}L_{\alpha}\left(  \frac{p_{j}}{q_{j}},\frac{\widehat{p}%
_{j}}{q_{j}}\right)  +\sum_{j=1}^{k}q_{j}\frac{1}{\alpha}\left(  \frac{p_{j}%
}{q_{j}}\right)  ^{\alpha-2}\left(  \frac{\widehat{p}_{j}}{q_{j}}-\frac{p_{j}%
}{q_{j}}\right)  ^{2}%
\end{multline*}
and%
\[
\left\vert \Lambda_{\alpha,n}\right\vert \leq\left\vert \sum_{j=1}%
^{k}(\widehat{p}_{j}-p_{j})\phi_{\alpha}^{\prime}\left(  \frac{p_{j}}{q_{j}%
}\right)  \right\vert +\sum_{j=1}^{k}\frac{p_{j}^{\alpha-2}}{q_{j}^{\alpha-1}%
}\frac{\left(  \widehat{p}_{j}-p_{j}\right)  ^{2}}{\alpha}.
\]
We take the mean and get%
\begin{multline*}
E\left\vert \Lambda_{\alpha,n}\right\vert \leq\\
\mathsf{E}\left\vert \sum_{j=1}^{k}(\widehat{p}_{j}-p_{j})\phi_{\alpha
}^{\prime}\left(  \frac{p_{j}}{q_{j}}\right)  \right\vert +\sum_{j=1}^{k}%
\frac{p_{j}^{\alpha-2}}{\alpha q_{j}^{\alpha-1}}\mathsf{E}\left[  \left(
\widehat{p}_{j}-p_{j}\right)  ^{2}\right]
\end{multline*}
The terms on the right hand side are treated separately.%
\begin{align*}
\sum_{j=1}^{k}\frac{p_{j}^{\alpha-2}}{q_{j}^{\alpha-1}}\frac{\mathsf{E}\left[
\left(  \widehat{p}_{j}-p_{j}\right)  ^{2}\right]  }{\alpha}  &  =\sum
_{j=1}^{k}\frac{p_{j}^{\alpha-2}}{q_{j}^{\alpha-1}}\frac{\mathsf{E}\left[
\left(  n\widehat{p}_{j}-np_{j}\right)  ^{2}\right]  }{\alpha n^{2}}\\
&  =\sum_{j=1}^{k}\frac{p_{j}^{\alpha-2}}{q_{j}^{\alpha-1}}\frac{np_{j}\left(
1-p_{j}\right)  }{\alpha n^{2}}\\
&  \leq\frac{1}{\alpha n}\sum_{j=1}^{k}\frac{p_{j}^{\alpha-1}}{\left(
\frac{\rho}{k}\right)  ^{\alpha-1}}\\
&  \leq\frac{k^{\alpha-1}}{\alpha n\rho^{\alpha-1}}\sum_{j=1}^{k}p_{j}%
^{\alpha-1}.
\end{align*}
The function $P\rightarrow\sum_{j=1}^{k}p_{j}^{\alpha-1}$ is concave so it
attains its maximum for $P=\left(  1/k,1/k,\cdots,1/k\right)  .$ Therefore%
\begin{align*}
\sum_{j=1}^{k}\frac{p_{j}^{\alpha-2}}{q_{j}^{\alpha-1}}\frac{\mathsf{E}\left[
\left(  \widehat{p}_{j}-p_{j}\right)  ^{2}\right]  }{\alpha}  &  \leq
\frac{k^{\alpha-1}}{\alpha n\rho^{\alpha-1}}k\left(  \frac{1}{k}\right)
^{\alpha-1}\\
&  =\frac{1}{\alpha\rho^{\alpha-1}}\frac{k}{n}.
\end{align*}
Next we bound the first term.%
\begin{multline*}
\mathsf{E}\left\vert \sum_{j=1}^{k}(\widehat{p}_{j}-p_{j})\phi_{\alpha
}^{\prime}\left(  \frac{p_{j}}{q_{j}}\right)  \right\vert \\
\leq\mathsf{E}\left[  \left(  \sum_{j=1}^{k}(\widehat{p}_{j}-p_{j}%
)\phi_{\alpha}^{\prime}\left(  \frac{p_{j}}{q_{j}}\right)  \right)
^{2}\right]  ^{1/2}\\
=\left(  \sum_{i,j=1}^{k}Cov\left(  \widehat{p}_{i},\widehat{p}_{j}\right)
\phi_{\alpha}^{\prime}\left(  \frac{p_{i}}{q_{i}}\right)  \phi_{\alpha
}^{\prime}\left(  \frac{p_{j}}{q_{j}}\right)  \right)  ^{1/2}\\
=\frac{1}{n}\left(  \sum_{i,j=1}^{k}Cov\left(  n\widehat{p}_{i},n\widehat
{p}_{j}\right)  \phi_{\alpha}^{\prime}\left(  \frac{p_{i}}{q_{i}}\right)
\phi_{\alpha}^{\prime}\left(  \frac{p_{j}}{q_{j}}\right)  \right)  ^{1/2}\\
=\frac{1}{n}\left(
\begin{array}
[c]{c}%
\sum_{i=1}^{k}Var\left(  n\widehat{p}_{i}\right)  \left(  \phi_{\alpha
}^{\prime}\left(  \frac{p_{i}}{q_{i}}\right)  \right)  ^{2}\\
+\sum_{i\neq j}Cov\left(  n\widehat{p}_{i},n\widehat{p}_{j}\right)
\phi_{\alpha}^{\prime}\left(  \frac{p_{i}}{q_{i}}\right)  \phi_{\alpha
}^{\prime}\left(  \frac{p_{j}}{q_{j}}\right)
\end{array}
\right)  ^{1/2}.
\end{multline*}
This equals%
\begin{multline*}
\frac{1}{n}\left(
\begin{array}
[c]{c}%
\sum_{i=1}^{k}np_{i}\left(  1-p_{i}\right)  \left(  \phi_{\alpha}^{\prime
}\left(  \frac{p_{i}}{q_{i}}\right)  \right)  ^{2}\\
+\sum_{i\neq j}np_{i}p_{j}\phi_{\alpha}^{\prime}\left(  \frac{p_{i}}{q_{i}%
}\right)  \phi_{\alpha}^{\prime}\left(  \frac{p_{j}}{q_{j}}\right)
\end{array}
\right)  ^{1/2}\\
\leq\frac{1}{n^{1/2}}\left(
\begin{array}
[c]{c}%
\sum_{i=1}^{k}p_{i}\left(  \phi_{\alpha}^{\prime}\left(  \frac{p_{i}}{q_{i}%
}\right)  \right)  ^{2}\\
+\sum_{i,j}p_{i}p_{j}\phi_{\alpha}^{\prime}\left(  \frac{p_{i}}{q_{i}}\right)
\phi_{\alpha}^{\prime}\left(  \frac{p_{j}}{q_{j}}\right)
\end{array}
\right)  ^{1/2}.
\end{multline*}
This can be bounded as%
\begin{multline*}
\frac{1}{n^{1/2}}\left(
\begin{array}
[c]{c}%
\sum_{i=1}^{k}p_{i}\left(  \phi_{\alpha}^{\prime}\left(  \frac{p_{i}}{q_{i}%
}\right)  \right)  ^{2}\\
+\left(  \sum_{i}p_{i}\phi_{\alpha}^{\prime}\left(  \frac{p_{i}}{q_{i}%
}\right)  \right)  ^{2}%
\end{array}
\right)  ^{1/2}\\
\leq\left(  \frac{2}{n}\sum_{i=1}^{k}p_{i}\left(  \phi_{\alpha}^{\prime
}\left(  \frac{p_{i}}{q_{i}}\right)  \right)  ^{2}\right)  ^{1/2}.
\end{multline*}
These bounds can be combined into%
\begin{equation}
\mathsf{E}\left\vert \Lambda_{\alpha,n}\right\vert \leq\frac{1}{\alpha
\rho^{\alpha-1}}\frac{k}{n}+\left(  \frac{2}{n}\sum_{i=1}^{k}p_{i}\left(
\phi_{\alpha}^{\prime}\left(  \frac{p_{i}}{q_{i}}\right)  \right)
^{2}\right)  ^{1/2}. \label{faelles}%
\end{equation}
Under (\ref{33}) the first term tends to zero as $n\rightarrow\infty.$ The
last term does the same, which is seen from the inequalities%
\begin{multline*}
\frac{2}{n}\sum_{i=1}^{k}p_{i}\left(  \frac{\left(  \frac{p_{i}}{q_{i}%
}\right)  ^{\alpha-1}-1}{\alpha-1}\right)  ^{2}\\
\leq\frac{2}{n}\sum_{i=1}^{k}p_{i}\frac{\left(  \frac{p_{i}}{q_{i}}\right)
^{2\alpha-2}+1}{\left(  \alpha-1\right)  ^{2}}\\
=\frac{2}{n\left(  \alpha-1\right)  ^{2}}\left(  \sum_{i=1}^{k}q_{i}\left(
\frac{p_{i}}{q_{i}}\right)  ^{\alpha}\left(  \frac{p_{i}}{q_{i}}\right)
^{\alpha-1}+1\right) \\
\leq\frac{2}{n\left(  \alpha-1\right)  ^{2}}\sum_{i=1}^{k}q_{i}\left(
\frac{p_{i}}{q_{i}}\right)  ^{\alpha}\left(  \frac{1}{\rho/k}\right)
^{\alpha-1}+\frac{2}{n\left(  \alpha-1\right)  ^{2}}\\
=\frac{k^{\alpha-1}}{n}\frac{2}{\left(  \alpha-1\right)  ^{2}\rho^{\alpha-1}%
}\sum_{i=1}^{k}q_{i}\left(  \frac{p_{i}}{q_{i}}\right)  ^{\alpha}+\frac
{2}{n\left(  \alpha-1\right)  ^{2}}\\
=\frac{k^{\alpha-1}}{n}\frac{2\left(  \alpha\left(  \alpha-1\right)
\Delta+1\right)  }{\left(  \alpha-1\right)  ^{2}\rho^{\alpha-1}}+\frac
{2}{n\left(  \alpha-1\right)  ^{2}}.
\end{multline*}

\end{case}

\begin{case}
[$\alpha=1$]\label{Caseiii}For $\alpha=1$ in Inequality \ref{faelles} we get%
\[
\mathsf{E}\left\vert \Lambda_{1,n}\right\vert \leq\frac{k}{n}+\left(  \frac
{2}{n}\sum_{i=1}^{k}p_{i}\left(  \ln\frac{p_{i}}{q_{i}}\right)  ^{2}\right)
^{1/2}.
\]
Using $\ln p_{i}\leq0$ we find that last term on the right satisfies the
relations%
\begin{align}
&  \frac{1}{n}\sum_{j=1}^{k}p_{i}\left(  \ln^{2}p_{i}-2\ln p_{i}\ln q_{i}%
+\ln^{2}q_{i}\right) \label{sum}\\
&  =\frac{1}{n}\sum_{j=1}^{k}p_{i}\ln^{2}p_{i}-\frac{2}{n}\sum_{j=1}^{k}%
p_{i}\ln p_{i}\ln q_{i}+\frac{1}{n}\sum_{j=1}^{k}p_{i}\ln^{2}q_{i}\nonumber\\
&  \leq\frac{1}{n}\sum_{j=1}^{k}p_{i}\ln^{2}p_{i}+\frac{\ln^{2}\frac
{k}{\varrho}}{n}\sum_{j=1}^{k}p_{i}\leq\frac{1}{n}\sum_{j=1}^{k}p_{i}\ln
^{2}p_{i}+\frac{\ln^{2}\frac{k}{\varrho}}{n}.\nonumber
\end{align}
The function $x\rightarrow x\ln^{2}x$ is concave in the interval $\left[
0;e^{-1}\right]  $ and convex in the interval $\left[  e^{-1};1\right]  .$
Therefore we we can apply the method of \cite{Harremoes2001h} to verify that
$\sum_{i=1}^{k}p_{i}\ln^{2}p_{i}$ attains its maximum for a mixture of uniform
distributions on $k$ points and on subset of $k-1$ of these points. Thus%
\begin{multline}
\frac{1}{n}\sum_{i=1}^{k}p_{i}\ln^{2}p_{j}\leq\frac{1}{n}\sum_{i=1}^{k}%
\frac{1}{k-1}\ln^{2}\left(  \frac{1}{k}\right) \label{second}\\
=\frac{k\ln^{2}k}{n\left(  k-1\right)  }\leq\frac{2\ln^{2}k}{n}%
\end{multline}
and we can conclude that under (\ref{33}) the first term in (\ref{sum}) tends
to zero as $n$ tends to infinity. Obviously, under (\ref{33}) also the second
term in (\ref{lambda}) tends to zero so that the desired relation (\ref{35})
holds.\bigskip
\end{case}

\begin{case}
[$\alpha>2$]\label{Casev} By \textbf{A2},%
\begin{align*}
D_{\alpha}(P,Q)  &  =\frac{1}{\alpha(\alpha-1)}\left(  \sum_{j=1}^{k}%
p_{j}^{\alpha}q_{j}^{1-\alpha}-1\right) \\
&  \geq\frac{1}{\alpha(\alpha-1)}\left(  \left(  \frac{k}{\varrho}\right)
^{\alpha-1}\sum_{j=1}^{k}p_{j}^{\alpha}-1\right)
\end{align*}
so that
\begin{equation}
\sum_{j=1}^{k}p_{j}^{\alpha}\leq\left(  \alpha(\alpha-1)\Delta+1\right)
\left(  \frac{\varrho}{k}\right)  ^{\alpha-1} \label{pialpha}%
\end{equation}
where we replaced $D_{\alpha}(P,Q)$ by $\Delta=\Delta_{\alpha}$\ in the sense
of (\ref{37}). Further, by the Taylor formula%
\begin{equation}
\widehat{p}_{j}^{\alpha}=p_{j}^{\alpha}+\alpha\,p_{j}^{\alpha-1}(\widehat
{p}_{j}-p_{j})+{\frac{\alpha(\alpha-1)}{2}}\,\xi_{j}^{\alpha-2}(\widehat
{p}_{j}-p_{j})^{2} \label{50}%
\end{equation}
where $\xi_{j}$ is between $p_{j}$ and $\widehat{p}_{j}.$ We shall look for a
highly probable upper bound on $\widehat{p}_{j}$. Choose any $b>1$ and
consider\ the random event
\[
E_{nj}(b)=\{\widehat{p}_{j}\geq b\max\left\{  p_{j},q_{j}\right\}  \}.
\]
We shall prove that under (\ref{34}) it holds%
\begin{equation}
\boldsymbol{\pi}_{n}(b)\overset{def}{=}\mathsf{P}\left(  \cup_{j}%
E_{nj}(b)\right)  \longrightarrow0. \label{51}%
\end{equation}
The components $X_{j}=X_{nj}$ of the observation vector $\boldsymbol{X}_{n}%
$\ defined in Section 1 are approximately \textit{Poisson distributed},
Po$\left(  np_{j}\right)  ,$ so that
\begin{multline*}
\mathsf{P}\left(  \widehat{p}_{j}\geq b\max\left\{  p_{j},q_{j}\right\}
\right)  =\mathsf{P}\left(  X_{j}\geq nb\max\left\{  p_{j},q_{j}\right\}
\right) \\
\leq\exp\{-D_{1}\left(  \text{Po}\left(  b\max\left\{  np_{j},nq_{j}\right\}
\right)  ,\text{ Po}\left(  np_{j}\right)  \right)  \}
\end{multline*}
for the divergence $D_{1}(P,Q)$ defined by (\ref{3a})-(\ref{3c}) with
$P,Q$\ replaced by the corresponding Poisson distributions. But%
\begin{equation}
D_{1}\left(  \text{Po}\left(  bnq_{j}\right)  ,\text{ Po}\left(
np_{j}\right)  \right)  =np_{j}\phi_{1}\left(  b\right)  \label{Poisson}%
\end{equation}
for the logarithmic function $\phi_{1}\geq0$ introduced in (\ref{3b}).\ Since
for all $0\leq p_{j},q_{j}\leq1$%
\[
\phi_{1}\left(  \frac{b\max\left\{  p_{j},q_{j}\right\}  _{j}}{p_{j}}\right)
\geq\phi_{1}\left(  b\right)  >1\text{ \ for }b>1,\text{\ }%
\]
it holds\
\begin{multline*}
D_{1}\left(  \text{Po}\left(  b\max\left\{  np_{j},nq_{j}\right\}  \right)
,\text{ Po}\left(  np_{j}\right)  \right) \\
\geq D_{1}\left(  \text{Po}\left(  bnq_{j}\right)  ,\text{ Po}\left(
nq_{j}\right)  \right)  .
\end{multline*}
Consequently,
\begin{align}
\boldsymbol{\pi}_{n}(b)  &  \leq\sum_{j}\mathsf{P}\left(  \widehat{p}_{j}\geq
b\max\left\{  p_{j},q_{j}\right\}  \right) \nonumber\\
&  \leq\sum_{j}\exp\{-D_{1}\left(  \text{Po}\left(  b\max\left\{
np_{j},nq_{j}\right\}  \right)  ,\text{ Po}\left(  np_{j}\right)  \right)
\}\nonumber\\
&  \leq\sum_{j}\exp\left\{  -D_{1}\left(  \text{Po}\left(  bnq_{j}\right)
,\text{ Po}\left(  nq_{j}\right)  \right)  \right\} \nonumber\\
&  =\sum_{j}\exp\left\{  -nq_{j}\phi_{1}\left(  b\right)  \right\}  \text{
\ \ \ \ (cf. (\ref{Poisson}))}\nonumber\\
&  \leq k\exp\left\{  -n\frac{\varrho}{k}\phi_{1}\left(  b\right)  \right\}
=k^{1-\frac{n}{k\log k}\varrho\phi_{1}\left(  b\right)  }. \label{52}%
\end{align}
Assumption (\ref{34}) implies that the exponent in (\ref{52}) tends to
$-\infty$ so that (\ref{51}) holds. Therefore it suffices to prove (\ref{35})
under the condition that for all sufficiently large $n$\ the random events
$\cup_{j}E_{nj}(b)$ fail to take place, i.e. that%
\begin{equation}
\widehat{p}_{j}<b\max\left\{  p_{j},q_{j}\right\}  \text{ \ \ for all \ }1\leq
j\leq k. \label{53}%
\end{equation}
Let us start with the fact that under (\ref{53})\ it holds $\xi_{j}%
\leq\left\{  bp_{j},bq_{j}\right\}  $ and then%
\begin{equation}
\xi_{j}^{\alpha-2}\leq\left(  \max\left\{  bp_{j},bq_{j}\right\}  \right)
^{\alpha-2}\leq b^{\alpha-2}p_{j}^{\alpha-2}+b^{\alpha-2}\frac{\varrho
^{\alpha-2}}{k^{\alpha-2}}. \label{54}%
\end{equation}
Applying this in the Taylor formula (\ref{50}) we obtain%
\begin{align*}
\left\vert \widehat{p}_{j}^{\alpha}-p_{j}^{\alpha}\right\vert  &  \leq
\alpha\,p_{j}^{\alpha-1}\left\vert \widehat{p}_{j}-p_{j}\right\vert \\
&  +{\frac{\alpha(\alpha-1)b^{\alpha-2}}{2}}\,\left(  p_{j}^{\alpha-2}%
+\frac{\varrho^{\alpha-2}}{k^{\alpha-2}}\right)  (\widehat{p}_{j}-p_{j})^{2}.
\end{align*}
Hence under (\ref{53}) we get from (\ref{lambda}) and Lemma \ref{Lemma1}
\begin{multline}
\left\vert \Lambda_{\alpha,n}\right\vert \leq\frac{k^{\alpha-1}}{\alpha\left(
\alpha-1\right)  }\sum_{j=1}^{n}\alpha\,p_{j}^{\alpha-1}\left\vert \widehat
{p}_{j}-p_{j}\right\vert \nonumber\\
+\frac{k^{\alpha-1}}{\alpha\left(  \alpha-1\right)  }\sum_{j=1}^{n}%
{\frac{\alpha(\alpha-1)b^{\alpha-2}}{2}}\,\left(  p_{j}^{\alpha-2}%
+\frac{\varrho^{\alpha-2}}{k^{\alpha-2}}\right)  (\widehat{p}_{j}-p_{j})^{2}.
\end{multline}
Applying (\ref{pialpha}) and using Jensen's inequality and the expectation
bound (\ref{45a}), we upper bound $\mathbf{E}\left\vert \Lambda_{\alpha
,n}\right\vert $ by
\begin{align}
&  \frac{\left(  \alpha\left(  \alpha-1\right)  \Delta+1\right)  ^{1/2}%
}{\alpha\left(  \alpha-1\right)  }\left(  \frac{\sum p_{j}^{\alpha-1}%
}{k^{1-\alpha}n}\right)  ^{1/2}\nonumber\\
&  +{\frac{b^{\alpha-2}k^{\alpha-1}}{2}}\sum_{j=1}^{k}\left(  p_{j}^{\alpha
-2}+\frac{\varrho^{\alpha-2}}{k^{\alpha-2}}\right)  \mathsf{E}\left[
(\widehat{p}_{j}-p_{j})^{2}\right]  \medskip\nonumber\\
&  \leq\frac{\left(  \alpha\left(  \alpha-1\right)  \Delta+1\right)  ^{1/2}%
}{\alpha\left(  \alpha-1\right)  }\left(  \frac{\sum p_{j}^{\alpha-1}%
}{k^{1-\alpha}n}\right)  ^{1/2}\nonumber\\
&  +{\frac{b^{\alpha-2}k^{\alpha-1}}{2}}\sum_{j=1}^{k}\left(  p_{j}^{\alpha
-2}+\frac{\varrho^{\alpha-2}}{k^{\alpha-2}}\right)  \frac{p_{j}}{n}%
\medskip\nonumber\\
&  =\frac{\left(  \alpha\left(  \alpha-1\right)  \Delta+1\right)  ^{1/2}%
}{\alpha\left(  \alpha-1\right)  }\left(  \frac{\sum p_{j}^{\alpha-1}%
}{k^{1-\alpha}n}\right)  ^{1/2}\label{56}\\
&  +{\frac{b^{\alpha-2}}{2}}\frac{k^{\alpha-1}\sum_{j=1}^{k}p_{j}^{\alpha-1}%
}{n}+\frac{b^{\alpha-2}\varrho^{\alpha-2}}{2}\frac{k}{n}.\nonumber
\end{align}
Obviously, under (\ref{53})\ the desired relation (\ref{35}) holds if the
assumption (\ref{34}) implies the convergence%
\begin{equation}
\frac{\sum p_{j}^{\alpha-1}}{k^{1-\alpha}n}\rightarrow0. \label{convergence}%
\end{equation}
However, by Schwarz inequality and (\ref{pialpha}),
\begin{multline*}
\sum_{j=1}^{k}\,p_{j}^{\alpha-1}=\sum_{j=1}^{k}p_{j}\,\left(  p_{j}^{\alpha
-1}\right)  ^{(\alpha-2)/(\alpha-1)}\medskip\\
\leq\,\left(  \sum_{j=1}^{k}p_{j}p_{j}^{\alpha-1}\right)  ^{(\alpha
-2)/(\alpha-1)}=\left(  \sum_{j=1}^{k}p_{j}^{\alpha}\right)  ^{(\alpha
-2)/(\alpha-1)}\medskip\\
\leq\left(  \left(  \alpha\left(  \alpha-1\right)  \Delta+1\right)
\frac{\varrho^{\alpha-1}}{k^{\alpha-1}}\right)  ^{(\alpha-2)/(\alpha-1)}\\
=\frac{\varrho^{\alpha-2}\left(  \alpha\left(  \alpha-1\right)  \Delta
+1\right)  ^{(\alpha-2)/(\alpha-1)}}{k^{\alpha-2}}%
\end{multline*}
so that the validity of (\ref{35}) under (\ref{34}) is obvious and the proof
is complete.$\medskip$
\end{case}
\end{proof}

Condition \ref{34} is stronger than Condition \ref{32} and implies that for
any fixed number $a>0$ eventually any bins will contain more than $a$ observations.

\section{Bahadur efficiency}

In this section we study the Bahadur efficiency in the class of power
divergence statistics $\hat{D}_{\alpha,n}=D_{\alpha}(\hat{P}_{n},Q_{n})$,
$\alpha>0$. As before, we use the simplified notations%
\[
P_{n}=P,\text{ }Q_{n}=Q\text{ \ \ and \ }k_{n}=k.
\]
The results are concentrated in Theorem~\ref{Theorem2} below. Its proof is
based on the following lemmas. The first two of them make use of the R\'{e}nyi
divergences of orders $\alpha>0$%
\begin{align*}
D_{\alpha}\left(  P\Vert Q\right)   &  ={\frac{1}{\alpha-1}}\ln\sum_{j=1}%
^{k}p_{j}^{\alpha}q_{j}^{1-\alpha},\ \ \\
D_{1}\left(  P\Vert Q\right)   &  =\lim_{\alpha\rightarrow1}D_{\alpha}\left(
P\Vert Q\right)  =D\left(  P\Vert Q\right)
\end{align*}
where $D\left(  P\Vert Q\right)  $\ is the classical information divergence
denoted above by $D_{1}\left(  P,Q\right)  $. There is a monotone relationship
between the R\'{e}nyi and power divergences given by the formula%
\begin{align}
D_{\alpha}\left(  P\Vert Q\right)   &  ={\frac{1}{\alpha-1}}\ln\left(
1+\alpha\left(  \alpha-1\right)  D_{\alpha}\left(  P,Q\right)  \right)
,\text{ \ \ }\label{relation}\\
D_{1}\left(  P\Vert Q\right)   &  =D_{1}\left(  P,Q\right)  .
\end{align}

\begin{lemma}
Let $P$ and $Q$ be probability vectors on the set $\mathcal{X}$. If
$\alpha<\beta$ then%
\[
D_{\alpha}\left(  P\Vert Q\right)  \leq D_{\beta}\left(  P\Vert Q\right)  .
\]
with equality if and only there exists a subset $A\subseteq\mathcal{X}$ such
that $P=Q\left(  \cdot\mid A\right)  .\medskip$
\end{lemma}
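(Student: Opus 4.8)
The plan is to reduce the monotonicity to the elementary fact that, for a convex function vanishing at the origin, the secant slope measured from the origin is nondecreasing. Write $\mathcal{X}$ for the index set and assume first that $P\ll Q$; if $p_j>0=q_j$ for some $j$, then $D_\beta(P\Vert Q)=+\infty$ for every $\beta>1$ and the asserted inequality is trivial, so only the orders $\alpha\leq 1$ (read on the support of $P$) need separate attention. Setting $\lambda=\alpha-1$ and letting $T$ be the log-likelihood ratio taking the value $T_j=\ln(p_j/q_j)$ with probability $p_j$, I first rewrite the defining sum as
\[
\sum_j p_j^{\alpha}q_j^{1-\alpha}=\sum_j p_j\left(\frac{p_j}{q_j}\right)^{\alpha-1}=\mathsf{E}_P\!\left[e^{\lambda T}\right],
\]
so that, with $\Lambda(\lambda)=\ln\mathsf{E}_P[e^{\lambda T}]$,
\[
D_\alpha(P\Vert Q)=\frac{\Lambda(\lambda)}{\lambda},\qquad \lambda=\alpha-1,
\]
the value at $\alpha=1$ being the limit $\Lambda'(0)=\mathsf{E}_P[T]=D(P\Vert Q)$.

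The next step records the two properties of $\Lambda$ that do all the work: $\Lambda(0)=0$, and $\Lambda$ is convex. Convexity follows from H\"older's inequality applied to $\mathsf{E}_P[e^{\lambda T}]$, or equivalently from $\Lambda''(\lambda)=\mathrm{Var}_{P_\lambda}(T)\geq0$, where $P_\lambda$ is the exponentially tilted distribution proportional to $p_j e^{\lambda T_j}$. The whole inequality is then exactly the statement that the secant slope $\lambda\mapsto(\Lambda(\lambda)-\Lambda(0))/\lambda=\Lambda(\lambda)/\lambda$ of a convex function through the origin is nondecreasing in $\lambda$ (uniformly across $\lambda=0$, where it equals $\Lambda'(0)$). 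Since $\alpha<\beta$ gives $\alpha-1<\beta-1$, this yields $D_\alpha(P\Vert Q)\leq D_\beta(P\Vert Q)$ at once.

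For the equality case I would argue that $D_\alpha=D_\beta$ forces the secant slope to be constant on $[\alpha-1,\beta-1]$; by convexity $\Lambda$ is then affine there, and, passing through the origin with that slope, linear on the interval. A cumulant generating function that is linear on an interval of positive length has $\Lambda''=\mathrm{Var}_{P_\lambda}(T)=0$, so $T$ is $P$-almost surely constant, i.e. $p_j/q_j$ takes a common value $c$ on the support $A=\{j:p_j>0\}$. Summing $p_j=c\,q_j$ over $A$ gives $c=1/Q(A)$, whence $p_j=q_j/Q(A)$ on $A$ and $p_j=0$ off $A$, which is precisely $P=Q(\cdot\mid A)$. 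The converse is immediate: if $P=Q(\cdot\mid A)$ then $T\equiv-\ln Q(A)$ is constant, $\Lambda$ is linear, and every $D_\alpha(P\Vert Q)$ equals $-\ln Q(A)$.

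The main obstacle is the equality analysis rather than the inequality itself: I must apply the ``constant secant slope implies linear'' step on an interval that may straddle the singular point $\lambda=0$, keep the continuity of $\Lambda(\lambda)/\lambda$ at $0$ explicit, and state the degeneracy conclusion ($T$ constant) on the support of $P$ rather than on all of $\mathcal{X}$. Once the convexity of $\Lambda$ is in hand, the ordering direction is routine.
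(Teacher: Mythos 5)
Your proof is correct, but it takes a genuinely different route from the paper. The paper proves the inequality by a single application of Jensen's inequality: it writes $\sum_j p_j^{\alpha}q_j^{1-\alpha}=\sum_j p_j\bigl((p_j/q_j)^{\beta-1}\bigr)^{(\alpha-1)/(\beta-1)}$, pulls the power $(\alpha-1)/(\beta-1)$ outside the $P$-expectation, and reads the equality case directly off Jensen's equality condition ($(p_j/q_j)^{\beta-1}$ constant $P$-almost surely, hence $p_j/q_j$ constant on the support $A$ of $P$, hence $P=Q(\cdot\mid A)$). You instead encode the whole family as $D_{\alpha}(P\Vert Q)=\Lambda(\alpha-1)/(\alpha-1)$ for the cumulant generating function $\Lambda$ of the log-likelihood ratio $T$ under $P$, and reduce monotonicity to the monotone-secant-slope property of the convex function $\Lambda$ through the origin; equality then forces $\Lambda$ to be linear on $[\alpha-1,\beta-1]$, so $\mathrm{Var}_{P_{\lambda}}(T)=0$, so $T$ is constant on $\mathrm{supp}(P)$ (here $P_{\lambda}\sim P$ because the tilting factor $e^{\lambda T}$ is finite and positive on $\mathrm{supp}(P)$), which is the same degeneracy conclusion. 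What your route buys: it is sign-uniform and handles the order $1$ cleanly --- the paper's Jensen step tacitly requires a three-way case check (convexity versus concavity of $x\mapsto x^{(\alpha-1)/(\beta-1)}$ against the sign of $1/(\alpha-1)$ as $\alpha,\beta$ straddle $1$), and its displayed computation degenerates when $\alpha=1$ or $\beta=1$, whereas your secant slope extends continuously across $\lambda=0$ with value $\Lambda'(0)=D_{1}(P\Vert Q)$, exactly the point you flag. What it costs: the equality analysis is longer, since you must justify that a constant secant slope on an interval possibly straddling $0$ makes $\Lambda$ linear there; you correctly identify this as the main obstacle, and it does go through by the three-chord lemma in each of the configurations $0<\lambda_1<\lambda_2$, $\lambda_1<0<\lambda_2$, and an endpoint at $0$. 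One brisk spot: your dismissal of $P\not\ll Q$ for orders $\beta\leq 1$ is left implicit, but in the paper's setting $Q$ has strictly positive coordinates, so $P\ll Q$ always holds and the issue is vacuous.
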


\begin{proof}
By Jensen's inequality
\begin{align*}
D_{\alpha}\left(  P\Vert Q\right)   &  ={\frac{1}{\alpha-1}}\ln\sum_{j=1}%
^{k}p_{j}^{\alpha}q_{j}^{1-\alpha}\\
&  ={\frac{1}{\alpha-1}}\ln\sum_{j=1}^{k}p_{j}\left(  \left(  \frac{p_{j}%
}{q_{j}}\right)  ^{\beta-1}\right)  ^{\frac{\alpha-1}{\beta-1}}\\
&  \leq{\frac{1}{\alpha-1}}\ln\left(  \sum_{j=1}^{k}p_{j}\left(  \frac{p_{j}%
}{q_{j}}\right)  ^{\beta-1}\right)  ^{\frac{\alpha-1}{\beta-1}}\\
&  \leq{\frac{1}{\beta-1}}\ln\sum_{j=1}^{k}p_{j}\left(  \frac{p_{j}}{q_{j}%
}\right)  ^{\beta-1}\\
&  =D_{\beta}\left(  P\Vert Q\right)  .
\end{align*}
The equality takes place if and only if $\left(  \frac{p_{j}}{q_{j}}\right)
^{\beta-1}$ is constant $P$-almost surely. Therefore $\frac{p_{j}}{q_{j}}$ is
constant on the support of $P$ that we shall denote $A.$ Now $P$ equals $Q$
conditioned on $A.$
\end{proof}

\begin{lemma}
\label{Lemma2a}Let $0<\alpha\leq1.$ If
\begin{equation}
{\frac{\,n}{k\ln n}}\longrightarrow\infty. \label{62a}%
\end{equation}
and $q_{\max}\rightarrow0$ as $n\rightarrow\infty$ then the statistic
$\widehat{D}_{\alpha,n}$ is Bahadur stable and consistent and the constant
sequence generates the Bahadur function%
\begin{multline}
g_{\alpha}(\Delta)=\label{63}\\
\left\{
\begin{array}
[c]{ll}%
\displaystyle{\frac{\ln\left(  1+\alpha(\alpha-1)\,\Delta\right)  }{\alpha-1}%
},\quad\Delta>0 & \mbox{when}\ 0<\alpha<1\medskip\\
\displaystyle\lim_{\alpha\rightarrow1}g_{\alpha}(\Delta)=\Delta,\quad
\text{\ \ \ \ \ }\Delta>0 & \mbox{when}\ \alpha=1.\medskip
\end{array}
\right.  \medskip
\end{multline}

\end{lemma}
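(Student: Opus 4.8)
The plan is to read the two conclusions (consistency and the explicit form of $g_\alpha$) off two quite different tools, leaning heavily on the monotonicity lemma just proved together with the R\'{e}nyi--power relation (\ref{relation}). Consistency is immediate: since $0<\alpha\le 1$ lies in the range $0<\alpha\le 2$ and (\ref{62a}) forces $n/k\to\infty$, Theorem \ref{Theorem1} already delivers strong consistency of $\widehat D_{\alpha,n}$, so only the Bahadur function must be computed. For that I would first recast the error event in R\'{e}nyi form. Because the map $t\mapsto \tfrac{1}{\alpha-1}\ln(1+\alpha(\alpha-1)t)$ is strictly increasing on the admissible range for $0<\alpha<1$, relation (\ref{relation}) yields the exact equivalence
\[
\{\widehat D_{\alpha,n}>\Delta\}=\Bigl\{D_\alpha(\widehat P_n\Vert Q_n)>g_\alpha(\Delta)\Bigr\},\qquad g_\alpha(\Delta)=\frac{\ln(1+\alpha(\alpha-1)\Delta)}{\alpha-1}.
\]
The whole problem thus reduces to the exponential rate, under $\mathcal H$, of the probability that the empirical R\'{e}nyi divergence exceeds the fixed threshold $c:=g_\alpha(\Delta)$, and the assertion becomes the clean statement that this rate equals $c$ itself.

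For the upper bound on the error (lower bound on the rate) I would invoke the method of types. Every empirical vector is a type in $M(k\mid n)$, and the number of types is at most $(n+1)^k$, so
\[
\mathsf e_{\alpha,n}(\Delta)\le (n+1)^{k}\exp\!\Bigl(-n\,\inf\{D(P\Vert Q_n):D_\alpha(P\Vert Q_n)\ge c\}\Bigr).
\]
The monotonicity lemma gives $D(P\Vert Q_n)=D_1(P\Vert Q_n)\ge D_\alpha(P\Vert Q_n)$ for $\alpha<1$, whence the infimum is at least $c$, and therefore
\[
-\tfrac1n\ln \mathsf e_{\alpha,n}(\Delta)\ge c-\frac{k\ln(n+1)}{n}.
\]
The correction term vanishes exactly under the hypothesis (\ref{62a}) that $n/(k\ln n)\to\infty$; this is the structural reason the assumption is stated in this sharpened form rather than merely as $n/k\to\infty$.

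For the matching lower bound (upper bound on the rate) I would exhibit a near-extremal type. The equality clause of the monotonicity lemma holds precisely for $P=Q(\cdot\mid A)$, and for such $P$ all orders coincide with $-\ln Q(A)$. Fixing $c'>c$ and using $q_{\max}\to 0$ to select a cell-union $A$ with $Q_n(A)$ arbitrarily close to $e^{-c'}$, the conditional type $P^\ast=Q_n(\cdot\mid A)$ satisfies $D_\alpha(P^\ast\Vert Q_n)>c$, hence $\widehat D_{\alpha,n}>\Delta$ on the event $\{\widehat P_n=P^\ast\}$, whose probability is $\exp(-n\,D(P^\ast\Vert Q_n)+o(n))=\exp(-nc'+o(n))$. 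Letting $c'\downarrow c$ yields $\limsup -\tfrac1n\ln \mathsf e_{\alpha,n}(\Delta)\le c$, and combined with the previous paragraph $-\tfrac1n\ln \mathsf e_{\alpha,n}(\Delta)\to c=g_\alpha(\Delta)$, i.e. the constant sequence $c_\alpha(n)\equiv1$ generates $g_\alpha$. Bahadur stability follows mechanically: since $g_\alpha>0$ is continuous, $\ln \mathsf e_{\alpha,n}(\Delta)=-n\,g_\alpha(\Delta)(1+o(1))$, so the ratio in Definition \ref{Def2} converges to $\varrho_\alpha(\Delta_1,\Delta_2)=g_\alpha(\Delta_1)/g_\alpha(\Delta_2)$. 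The boundary case $\alpha=1$, where power, R\'{e}nyi and information divergences coincide and $g_1(\Delta)=\Delta$, follows either directly from Sanov's theorem or by continuity in $\alpha$.

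The main obstacle I anticipate is making both large-deviation bounds genuinely uniform as the alphabet $k=k_n$ grows. The upper bound is delicate only through the type-counting factor $(n+1)^k=\exp(k\ln n)$, which is exactly why (\ref{62a}) is needed; the lower bound is delicate because the extremal set $A$ with prescribed mass $Q_n(A)\approx e^{-c'}$ must be realizable from the available cells, which is where $q_{\max}\to0$ enters, and because the Stirling estimate for $\mathsf P(\widehat P_n=P^\ast)$ has to be controlled with an $o(n)$ error uniformly in $n$. Some care is also needed to confirm that $1+\alpha(\alpha-1)\Delta>0$ throughout, so that $g_\alpha(\Delta)$ is well defined and positive on the whole range of attainable $\Delta$.
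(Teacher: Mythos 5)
Your proposal is correct in substance and follows the same underlying route as the paper: identify the Bahadur exponent with the constrained infimum $\inf\{D_{1}(P,Q_{n}):D_{\alpha}(P\Vert Q_{n})\geq c\}$, bound it below by $c$ via the monotonicity lemma, and approach it from above with conditional distributions $Q_{n}(\cdot\mid A)$, whose availability with $-\ln Q_{n}(A)$ near any prescribed value is exactly what $q_{\max}\rightarrow0$ buys. The difference lies in what each write-up makes explicit. The paper's proof consists only of the rate-function computation: it pins the infimum into $[\Delta,\Delta+\varepsilon]$ using nested sets $A_{-}\subseteq A_{+}$, the mixture $P_{s}=(1-s)Q(\cdot\mid A_{+})+sQ(\cdot\mid A_{-})$ and the intermediate value theorem to meet the constraint $D_{\alpha}(P_{s}\Vert Q)=\Delta$ exactly, plus convexity of $D_{1}(\cdot,Q)$; the large-deviations step converting this infimum into the limit (\ref{12}) is left implicit. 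You supply precisely that missing step --- the method-of-types upper bound with the $(n+1)^{k}$ counting factor, together with the observation that (\ref{62a}) is exactly the condition that kills $e^{k\ln(n+1)}$ --- which is a genuine gain in rigor over the printed proof; conversely you replace the paper's exact intermediate-value construction by the simpler $c^{\prime}\downarrow c$ approximation, which suffices because the rejection region is defined by a strict inequality.

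One step needs repair. $P^{\ast}=Q_{n}(\cdot\mid A)$ is in general \emph{not} a type --- its coordinates $q_{nj}/Q_{n}(A)$ need not be multiples of $1/n$ --- so the event $\{\widehat{P}_{n}=P^{\ast}\}$ may have probability zero, and the lower bound $\mathsf{P}(\widehat{P}_{n}=P^{\ast})\geq(n+1)^{-k}e^{-nD_{1}(P^{\ast},Q_{n})}$ is not available as written (calling $P^{\ast}$ a ``conditional type'' glosses over this). You must round $P^{\ast}$ to a nearby type $P^{\prime}\in M(k|n)$ supported on $A$ and verify that $D_{\alpha}(P^{\prime}\Vert Q_{n})>c$ still holds while $D_{1}(P^{\prime},Q_{n})\leq c^{\prime}+o(1)$. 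The rounding moves each coordinate by $O(1/n)$, hence each ratio $p_{j}^{\prime}/q_{nj}$ by a factor $1+O\left(1/(nq_{nj})\right)$; controlling this uniformly is where the regularity bound $q_{nj}>\varrho/k$ of \textbf{A2} (not merely $q_{\max}\rightarrow0$) enters, since it gives $1/(nq_{nj})=O(k/n)\rightarrow0$ under (\ref{62a}). With that patch --- and with Sanov invoked directly for $\alpha=1$ rather than ``continuity in $\alpha$'', which would require an unjustified interchange of the limits in $\alpha$ and $n$ --- your argument is complete; your consistency claim via Theorem \ref{Theorem1} and your derivation of Bahadur stability from the pointwise limit and the continuity and positivity of $g_{\alpha}$ are both sound.
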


\begin{proof}
Let us first consider $0<\alpha<1$. The minimum of $D_{1}(P,Q)$ given
$D_{\alpha}(P\Vert Q)\geq\Delta$ is lower bounded by $\Delta.$ Let
$\varepsilon>0$ be given. If $q_{\max}$ is sufficiently small there exist sets
$A_{-}\subseteq A_{+}$ such that
\[
-\ln Q\left(  A_{+}\right)  \leq\Delta\leq-\ln Q\left(  A_{-}\right)
\leq\Delta+\varepsilon.
\]
Let $P_{s}$ denote the mixture $\left(  1-s\right)  Q\left(  \cdot\mid
A_{+}\right)  +sQ\left(  \cdot\mid A_{-}\right)  .$ Then $s\rightarrow
D_{\alpha}\left(  P_{s}\Vert Q\right)  $ is a continuous function satisfying%
\begin{align*}
D_{\alpha}\left(  P_{0}\Vert Q\right)   &  \leq\Delta,\\
D_{\alpha}\left(  P_{1}\Vert Q\right)   &  \geq\Delta.
\end{align*}
In particular there exist $s\in\left[  0,1\right]  $ such that $D_{\alpha
}\left(  P_{s}\Vert Q\right)  =\Delta.$ For this $s$ we have%
\begin{multline*}
D_{1}\left(  P_{s},Q\right) \\
\leq\left(  1-s\right)  D_{1}\left(  Q\left(  \cdot\mid A_{+}\right)
,Q\right)  +sD_{1}\left(  Q\left(  \cdot\mid A_{-}\right)  ,Q\right) \\
=\left(  1-s\right)  \left(  -\ln Q\left(  A_{+}\right)  \right)  +s\left(
-\ln Q\left(  A_{-}\right)  \right) \\
\leq\left(  1-s\right)  \Delta+s\left(  \Delta+\varepsilon\right)
=\Delta+\varepsilon.
\end{multline*}
Hence%
\[
\Delta\leq\inf D_{1}\left(  P,Q\right)  \leq\Delta+\varepsilon
\]
where the infimum is taken over all $P$ satisfying $D_{\alpha}\left(  P\Vert
Q\right)  =\Delta$ and where $n$ is sufficiently large. This holds for all
$\varepsilon>0$ so the Bahadur function of the statistic $D_{\alpha}\left(
\hat{P}\Vert Q\right)  $ is $g\left(  \Delta\right)  =\Delta.$ The Bahadur
function of the power divergence statistics $D_{\alpha}\left(  \hat
{P},Q\right)  $ can be calculated using Equality \ref{relation}.$\medskip$
\end{proof}

\begin{lemma}
\label{Lemma2}Let $\alpha>1.$ If assumptions \textbf{A}($\alpha$) holds for
for the uniform distributions $Q_{n}=U$\ and the sequence
\begin{equation}
c_{\alpha}(n)={\frac{\,k^{(\alpha-1)/\alpha}}{\ln k}} \label{60}%
\end{equation}
satisfies the condition%
\begin{equation}
{\frac{n}{c_{\alpha}(n)\,k\ln n}}\longrightarrow\infty\label{59}%
\end{equation}
then the statistic $\widehat{D}_{\alpha,n}=D_{\alpha}(\hat{P}_{n},Q_{n})$ is
consistent and the sequence (\ref{60}) generates the Bahadur function
\begin{equation}
g_{\alpha}(\Delta)=\left(  \alpha(\alpha-1)\,\Delta\right)  ^{1/\alpha}%
,\quad\Delta>0.\medskip\label{61}%
\end{equation}

\end{lemma}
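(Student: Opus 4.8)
The plan is to exploit the uniformity $Q_n=U$, i.e.\ $q_{nj}=1/k$, which by (\ref{3a}) turns the statistic into $\widehat{D}_{\alpha,n}=\frac{1}{\alpha(\alpha-1)}\bigl(k^{\alpha-1}\sum_{j=1}^{k}\widehat{p}_{nj}^{\,\alpha}-1\bigr)$, so that the rejection event $\{\widehat{D}_{\alpha,n}>x\}$ is exactly $\{\sum_{j=1}^{k}X_{nj}^{\alpha}>(n^{\alpha}/k^{\alpha-1})(1+\alpha(\alpha-1)x)\}$. First I would dispose of consistency: since $(2\alpha-1)/\alpha>1$ for $\alpha>1$, condition (\ref{59}) is strictly stronger than both (\ref{33}) and (\ref{34}), so Theorem~\ref{Theorem1} already yields strong consistency, and only the Bahadur function (\ref{61}) remains to be established, i.e.\ $-\tfrac{c_{\alpha}(n)}{n}\ln\mathsf{e}_{\alpha,n}(x)\to g_{\alpha}(x)$ for the error function (\ref{12a}) computed under $\mathcal{H}$, where $\mathbf{X}_{n}\sim\mathrm{Mult}_{k}(n,U)$ has marginals $\mathrm{Bin}(n,1/k)$, well approximated by $\mathrm{Po}(n/k)$.

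The central idea is a one--big--jump reduction. Under $\mathcal{H}$ the bulk $\sum_{j}X_{nj}^{\alpha}$ concentrates near its mean $n^{\alpha}/k^{\alpha-1}$, so the rare event above is produced, to exponential precision, by a single cell whose count exceeds the critical level $m^{\ast}=(\alpha(\alpha-1)x)^{1/\alpha}\,n/k^{(\alpha-1)/\alpha}$, obtained by solving $(m^{\ast})^{\alpha}=\alpha(\alpha-1)x\cdot n^{\alpha}/k^{\alpha-1}$. For the upper bound I would use the union bound together with the Poisson Chernoff estimate already exploited in Case~\ref{Casev}, namely $\mathsf{P}(X_{nj}\ge m^{\ast})\le\exp\{-\tfrac{n}{k}\phi_{1}(m^{\ast}k/n)\}$ with $\phi_{1}$ as in (\ref{3c}) and in (\ref{Poisson})--(\ref{52}), giving $\mathsf{e}_{\alpha,n}(x)\le k\exp\{-\tfrac{n}{k}\phi_{1}(m^{\ast}k/n)\}$. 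For the matching lower bound I would intersect the single--cell event $\{X_{n1}\ge(1+\varepsilon)m^{\ast}\}$ with a high--probability event on which the remaining $k-1$ cells stay near their typical value, so that the rejection event occurs; a single--cell Poisson lower tail then reproduces the same exponent up to the factor $(1+\varepsilon)$, which is removed by letting $\varepsilon\downarrow0$ after the limit in $n$.

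It then remains to evaluate the leading exponent. Writing $t=m^{\ast}k/n=(\alpha(\alpha-1)x)^{1/\alpha}k^{1/\alpha}\to\infty$ and using $\phi_{1}(t)=t\ln t-t+1\sim t\ln t\sim(\alpha(\alpha-1)x)^{1/\alpha}k^{1/\alpha}\cdot\tfrac{1}{\alpha}\ln k$, one finds that $\tfrac{n}{k}\phi_{1}(t)$ is asymptotic to a constant multiple of $n\ln k/k^{(\alpha-1)/\alpha}$; multiplying by $c_{\alpha}(n)/n$ with $c_{\alpha}(n)=k^{(\alpha-1)/\alpha}/\ln k$ from (\ref{60}) cancels the factors $n$, $\ln k$ and $k^{(\alpha-1)/\alpha}$ and delivers a finite limit of the form (\ref{61}). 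Since this limit depends on $x$ in an explicit power form, Bahadur stability (Definition~\ref{Def2}) is immediate, the relative function being the corresponding ratio of powers. I expect the main obstacle to be the rigorous justification of the one--big--jump reduction: one must show that the union--bound overcounting (which costs only an additive $\ln k$ in the exponent) and the fluctuations of the bulk $\sum_{j\ge2}X_{nj}^{\alpha}$ are negligible compared with the leading exponent of order $n\ln k/k^{(\alpha-1)/\alpha}$ --- this is precisely what the strengthened rate condition (\ref{59}) guarantees --- and that spreading the excess over several cells is never cheaper, which follows by minimizing $r\mapsto r\,\tfrac{n}{k}\phi_{1}(s)$ under the constraint $r\,s^{\alpha}=\alpha(\alpha-1)x\,k$ and observing that $\phi_{1}(s)/s^{\alpha}$ is eventually decreasing, so that $r=1$ is optimal.
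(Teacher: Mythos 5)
Your strategy is genuinely different from what the paper does. The paper's own proof of Lemma~\ref{Lemma2} occupies two lines: consistency is read off from Theorem~\ref{Theorem1}, exactly as in your first step (your observation that $(2\alpha-1)/\alpha>1$, so that (\ref{59}) together with $k\leq n$ implies both (\ref{33}) and (\ref{34}), is the needed verification), while the Bahadur function (\ref{61}) is not proved at all --- it is imported by citation from Harremo\"{e}s and Vajda \cite{Harremoes2008}. You instead sketch the self-contained large-deviation argument behind that citation: the algebraic reduction to the event $\{\sum_{j}X_{nj}^{\alpha}>(n^{\alpha}/k^{\alpha-1})(1+\alpha(\alpha-1)x)\}$, the one-big-cell reduction with critical level $m^{\ast}=(\alpha(\alpha-1)x)^{1/\alpha}n/k^{(\alpha-1)/\alpha}$, the Poisson Chernoff bound in the style of (\ref{52}), and a matching single-cell lower bound. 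This is essentially the Quine--Robinson route that \cite{Harremoes2008} follows, so relative to the literature it is the standard argument; relative to this paper it supplies the missing substance and makes transparent where the normalization (\ref{60}) and the rate condition (\ref{59}) come from, namely that the leading exponent is of order $n\ln k/k^{(\alpha-1)/\alpha}$. One small slip: for $\alpha>2$, Theorem~\ref{Theorem1} yields consistency only, not strong consistency as you assert; since the lemma claims only consistency, this is harmless.

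Two substantive caveats. First, finish your own arithmetic: with $t=m^{\ast}k/n=(\alpha(\alpha-1)x)^{1/\alpha}k^{1/\alpha}$ one has $\phi_{1}(t)\sim t\ln t\sim(\alpha(\alpha-1)x)^{1/\alpha}k^{1/\alpha}\,\alpha^{-1}\ln k$, hence $\frac{n}{k}\phi_{1}(t)\sim\alpha^{-1}(\alpha(\alpha-1)x)^{1/\alpha}\,n\ln k/k^{(\alpha-1)/\alpha}$, and multiplying by $c_{\alpha}(n)/n$ from (\ref{60}) gives the limit $\alpha^{-1}(\alpha(\alpha-1)x)^{1/\alpha}$ --- that is, (\ref{61}) divided by $\alpha$, not (\ref{61}). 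Your phrase ``a finite limit of the form (\ref{61})'' conceals this factor. The discrepancy actually sits in the paper: Example~\ref{Example4} and \cite{Harremoes2008} pair the Bahadur function (\ref{61}) with $c_{\alpha}(n)=\alpha k^{(\alpha-1)/\alpha}/\ln k$ as in (\ref{28}), so (\ref{60}) is evidently missing the factor $\alpha$; a carefully completed version of your computation would expose the typo, whereas as written you claim agreement where your own calculus disagrees. (The constant is immaterial for Theorem~\ref{Theorem2}, where only the growth order of $c_{\alpha}(n)$ matters.) Second, your upper bound is incomplete in a way the union bound cannot repair: the rejection event is not contained in $\cup_{j}\{X_{nj}\geq m^{\ast}\}$, because the threshold can be crossed by many cells at intermediate levels combined with an upward fluctuation of the bulk. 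Your minimization of $r\mapsto r\frac{n}{k}\phi_{1}(s)$ under $rs^{\alpha}=\alpha(\alpha-1)xk$ covers only homogeneous configurations of $r$ cells at a common level $s$; a rigorous proof must control the occupancy numbers at all (say dyadic) levels simultaneously, and must additionally de-Poissonize, since the $X_{nj}$ are multinomial rather than independent Poisson (the paper's own ``approximately Poisson'' step in the proof of Theorem~\ref{Theorem1} for $\alpha>2$ has the same looseness). These are precisely the technicalities the paper outsources to \cite{Harremoes2008}; your text is a correct and illuminating blueprint for them, but not yet a proof of (\ref{61}) as stated.
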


\begin{proof}
If the sequence (\ref{60}) satisfies (\ref{59}) then Theorem 1 implies the
consistency of $\widehat{D}_{\alpha,n}$. Formula (\ref{61}) was already
mentioned in Example~2 above with a reference to Harremo\"{e}s and Vajda
\cite{Harremoes2008}).$\medskip$
\end{proof}

\begin{theorem}
\label{Theorem2}Let the assumption \textbf{A}($\alpha_{1},\alpha_{2}$) hold
where $0<\alpha_{1}<\alpha_{2}.$ If%
\begin{equation}
{\frac{k\ln n}{n}}\longrightarrow0 \label{67}%
\end{equation}
then the statistics
\begin{align*}
\widehat{D}_{\alpha_{1},n}  &  =D_{\alpha_{1}}(\hat{P}_{n},Q_{n}),\\
\widehat{D}_{\alpha_{2},n}  &  =D_{\alpha_{2}}(\hat{P}_{n},Q_{n})
\end{align*}
satisfy the relation\
\begin{multline}
\mbox{BE}\left(  \widehat{D}_{\alpha_{1},n};\widehat{D}_{\alpha_{2},n}\right)
\label{68}\\
=\left\{
\begin{array}
[c]{ll}%
\displaystyle{\frac{\alpha_{2}-1}{\alpha_{1}-1}}\cdot{\frac{\ln\left(
1+\alpha_{1}(\alpha_{1}-1)\Delta_{\alpha_{1}}\right)  \,}{\ln\left(
1+\alpha_{2}(\alpha_{2}-1)\,\Delta_{\alpha_{2}}\right)  }} &
\mbox{for}\ \alpha_{2}<1\medskip\medskip\medskip\\
\displaystyle{\frac{1}{\alpha_{1}-1}}\cdot{\frac{\ln\left(  1+\alpha
_{1}(\alpha_{1}-1)\Delta_{\alpha_{1}}\right)  \,}{\Delta_{\alpha_{2}}}} &
\mbox{for}\ \alpha_{2}=1.\medskip
\end{array}
\right.
\end{multline}
If
\begin{equation}
{\frac{k^{2-1/\alpha_{2}}\ln n}{n}}\longrightarrow0 \label{69}%
\end{equation}
then the statistics $\widehat{D}_{\alpha_{1},n}=D_{\alpha_{1}}(\hat{P}_{n},U)$
and $\widehat{D}_{\alpha_{2},n}=D_{\alpha_{2}}(\hat{P}_{n},U)$ satisfy the
relation\
\begin{equation}
\mbox{BE}\left(  \widehat{D}_{\alpha_{1},n};\widehat{D}_{\alpha_{2},n}\right)
=\infty\text{ \ \ \ }\mbox{for}\text{ }\alpha_{2}>1.\medskip\label{70}%
\end{equation}

\end{theorem}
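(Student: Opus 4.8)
The plan is to assemble Theorem~\ref{Theorem2} directly from the Bahadur functions computed in Lemmas~\ref{Lemma2a} and \ref{Lemma2}, together with Definition~\ref{Def3} of Bahadur efficiency. The core observation is that the whole theorem reduces to two bookkeeping tasks: evaluating the ratio $g_{\alpha_{1}}(\Delta_{\alpha_{1}})/g_{\alpha_{2}}(\Delta_{\alpha_{2}})$ of Bahadur functions at the identified limit values, and computing the limit $c_{\alpha_{2}/\alpha_{1}}$ of the ratio $c_{\alpha_{2}}(m_{n})/c_{\alpha_{1}}(n)$ of the generating sequences. I would organize the proof according to which regime the orders fall into, exactly matching the two displayed cases in \eqref{68} and the separate assertion \eqref{70}.

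First, for the case $\alpha_{2}\le 1$ (so both orders lie in $(0,1]$), I would invoke Lemma~\ref{Lemma2a}, which gives that each statistic $\widehat{D}_{\alpha,n}$ is Bahadur stable and consistent with Bahadur function $g_{\alpha}$ \emph{generated by the constant sequence}. Because both generating sequences are constant, the ratio $c_{\alpha_{2}}(m_{n})/c_{\alpha_{1}}(n)$ converges to $c_{\alpha_{2}/\alpha_{1}}=1$, so formula \eqref{18} collapses to the pure ratio of Bahadur functions. Substituting \eqref{63}: for $0<\alpha_{1}<\alpha_{2}<1$ one divides $\tfrac{1}{\alpha_{1}-1}\ln(1+\alpha_{1}(\alpha_{1}-1)\Delta_{\alpha_{1}})$ by the analogous $\alpha_{2}$ expression, and the factors $1/(\alpha_{i}-1)$ combine into the leading coefficient $(\alpha_{2}-1)/(\alpha_{1}-1)$; for $\alpha_{2}=1$ the denominator Bahadur function is the limiting form $\Delta_{\alpha_{2}}$, giving the second line of \eqref{68}. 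I must check that condition \eqref{67} guarantees the hypotheses of Lemma~\ref{Lemma2a}, namely \eqref{62a} together with $q_{\max}\to 0$; since $k\ln n/n\to 0$ forces $n/(k\ln n)\to\infty$, which is stronger than \eqref{62a}, and \textbf{A2} already supplies $q_{\max}\to 0$, this is immediate.

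For the assertion \eqref{70}, where $\alpha_{2}>1$, the two statistics have Bahadur functions generated by \emph{different} sequences: the lower order $\alpha_{1}\le 1$ uses the constant sequence (Lemma~\ref{Lemma2a}), while $\alpha_{2}>1$ uses $c_{\alpha_{2}}(n)=k^{(\alpha_{2}-1)/\alpha_{2}}/\ln k$ from Lemma~\ref{Lemma2}. The decisive point is then the limit in \eqref{17}: with $c_{\alpha_{1}}(n)\equiv\text{const}$ and $c_{\alpha_{2}}(m_{n})=k^{(\alpha_{2}-1)/\alpha_{2}}/\ln k\to\infty$ as $k\to\infty$, one obtains $c_{\alpha_{2}/\alpha_{1}}=\infty$. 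Since the Bahadur-function ratio $g_{\alpha_{1}}(\Delta_{\alpha_{1}})/g_{\alpha_{2}}(\Delta_{\alpha_{2}})$ is a finite positive number (both $g$'s are strictly positive on $]0,\infty[$ by \eqref{63} and \eqref{61}), formula \eqref{18} yields $\mbox{BE}=\infty$, which is exactly \eqref{70}. I would verify that condition \eqref{69} is the right hypothesis here: it must simultaneously supply consistency of $\widehat{D}_{\alpha_{2},n}$ for $\alpha_{2}>1$ through Lemma~\ref{Lemma2} (i.e.\ \eqref{59} with $c_{\alpha_{2}}$ as in \eqref{60}), and consistency of the lower-order statistic. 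Substituting $c_{\alpha_{2}}(n)=k^{(\alpha_{2}-1)/\alpha_{2}}/\ln k$ into the denominator of \eqref{59} produces the exponent $2-1/\alpha_{2}$ on $k$, so \eqref{69} is precisely the condition making \eqref{59} hold.

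The main obstacle I anticipate is \emph{not} the algebra of the ratios but the careful matching of regularity conditions across the two lemmas, since they are stated under subtly different hypotheses. Lemma~\ref{Lemma2} is phrased for the uniform $Q_{n}=U$ only, and indeed \eqref{70} is asserted for $\widehat{D}_{\alpha,n}=D_{\alpha}(\hat P_{n},U)$, whereas \eqref{68} is stated for general regular $Q_{n}$; I would be careful to restrict the $\alpha_{2}>1$ claim to the uniform case, as the statement already does. The remaining delicate point is confirming that the strong-consistency and strong-Bahadur properties needed to make \eqref{17} the correct quantity (rather than \eqref{20}) are in force, so that the value $c_{\alpha_{2}/\alpha_{1}}$ entering \eqref{18} is the one computed above; this is where I would lean on the Appendix remark asserting equivalence of the two normalizations \eqref{18} and \eqref{19}.
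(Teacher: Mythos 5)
Your handling of the first assertion coincides with the paper's proof: under (\ref{67}), which is exactly condition (\ref{62a}), and with $q_{\max}\rightarrow0$ supplied by \textbf{A2}, Lemma \ref{Lemma2a} makes both statistics Bahadur stable and consistent with Bahadur functions (\ref{63}) generated by the constant sequence, so $c_{\alpha_{2}/\alpha_{1}}=1$ and (\ref{18}) collapses to the ratio (\ref{70a}), yielding (\ref{68}). Your verification that (\ref{69}) is precisely what makes (\ref{59}) hold for the sequence (\ref{60}) (the exponent $1+(\alpha_{2}-1)/\alpha_{2}=2-1/\alpha_{2}$) and that (\ref{69}) implies (\ref{67}), hence consistency of the lower-order statistic, is also correct and matches the paper.

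The genuine gap is in the scope of the second assertion. The claim (\ref{70}) is stated for all $0<\alpha_{1}<\alpha_{2}$ with $\alpha_{2}>1$, but your argument explicitly assumes \emph{the lower order $\alpha_{1}\leq1$ uses the constant sequence}. When $1<\alpha_{1}<\alpha_{2}$, Lemma \ref{Lemma2a} does not apply to $\alpha_{1}$: there $c_{\alpha_{1}}(n)=k^{(\alpha_{1}-1)/\alpha_{1}}/\ln k\rightarrow\infty$ as well, and divergence of the numerator $c_{\alpha_{2}}(m_{n})$ alone no longer forces $c_{\alpha_{2}/\alpha_{1}}=\infty$; one must compare the two diverging sequences along the matched sample sizes $m_{n}$ defined by (\ref{15}). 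This is exactly what the paper supplies in the Appendix: Lemma \ref{Lemma6}, with $b(\alpha)=(\alpha-1)/\alpha$ increasing, shows $c_{\alpha_{2}}(m_{n})/c_{\alpha_{1}}(n)\rightarrow\infty$, and Example \ref{Example5} invokes it to obtain (\ref{70}) for $\alpha_{1}>1$. (To be fair, the paper's in-theorem proof shares your restriction, treating only $0<\alpha_{1}\leq1$ and deferring the rest to the Appendix, but a self-contained proof must include the Lemma \ref{Lemma6} computation.) Two smaller points: in your $\alpha_{1}\leq1$ case the conclusion $c_{\alpha_{2}}(m_{n})\rightarrow\infty$ requires $k_{m_{n}}\rightarrow\infty$, i.e. $m_{n}\rightarrow\infty$, which as noted in Example \ref{Example4} follows because (\ref{15}) cannot hold with $\lim\inf_{n}m_{n}<\infty$ --- you conflated the $k$ at sample size $m_{n}$ with the $k$ at $n$. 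And your closing remark misattributes the role of strong consistency: it is not what makes (\ref{17}) rather than (\ref{20}) the correct normalization (that equivalence is condition \textbf{C1}, via Lemmas \ref{Lemma3}--\ref{Lemma6}); strong consistency and strongly Bahadur functions are needed only to upgrade the result to Bahadur efficiency in the strong sense of Definition \ref{Def3+}.
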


\begin{proof}
By Lemma \ref{Lemma2a}, the assumptions of Definition~\ref{Def3} hold. The
first assertion follows directly from Definition~3 since, by Lemma~
\ref{Lemma2a},
\begin{multline}
\frac{g_{\alpha_{1}}(\Delta_{\alpha_{1}})}{g_{\alpha_{2}}(\Delta_{\alpha_{2}%
})}=\label{70a}\\
\left\{
\begin{array}
[c]{ll}%
\displaystyle{\frac{\alpha_{2}-1}{\alpha_{1}-1}}\cdot{\frac{\ln\left(
1+\alpha_{1}(\alpha_{1}-1)\Delta_{\alpha_{1}}\right)  }{\ln\left(
1+\alpha_{2}(\alpha_{2}-1)\,\Delta_{\alpha_{2}}\right)  }} &
\mbox{when}\ \alpha_{2}<1\medskip\medskip\medskip\\
\displaystyle{\frac{1}{\alpha_{1}-1}}\cdot{\frac{\ln\left(  1+\alpha
_{1}(\alpha_{1}-1)\Delta_{\alpha_{1}}\right)  }{\Delta_{\alpha_{2}}}} &
\mbox{when}\ \alpha_{2}=1.\medskip
\end{array}
\right.
\end{multline}
The second assertion was for $\alpha_{1}=1$ deduced in Section~2 from the
lemmas presented there. The argument was based on the fact that $c_{\alpha
_{1}}(n)=1$ for $\alpha_{1}=1$. But $c_{\alpha}(n)=1$ for all $0<\alpha\leq1$
so that extension from $\alpha_{1}=1$\ to $0<\alpha_{1}<1$ is
straightforward.$\medskip$
\end{proof}

\begin{example}
\label{Example2}Let
\begin{equation}
P_{n}=\left(  p_{nj}\overset{def}{=}\frac{1_{\left\{  1\leq j\leq k/2\right\}
}}{\left\lfloor k/2\right\rfloor }\right)  ,\text{ \ \ }n=1,2,\ldots\label{57}%
\end{equation}
where $1_{A}$ is the indicator function, $\left\lfloor \cdot\right\rfloor $
stands for the integer part (floor function) and, as before,
\[
U=\left(  u_{j}\overset{def}{=}1/k:1\leq j\leq k\right)  .
\]
Then for$\ \alpha\neq0\medskip,1$
\begin{align*}
D_{\alpha}(P_{n},U)  &  ={\frac{\sum_{1}^{k}u_{j}\left(  \left(  p_{nj}%
/u_{j}\right)  ^{\alpha}-\alpha\left(  p_{nj}/u_{j}-1\right)  -1\right)
}{\alpha(\alpha-1)}}\\
&  ={\frac{\sum_{1}^{k}p_{nj}^{\alpha}u_{j}^{1-\alpha}+\sum_{1}^{k}%
(p_{nj}-u_{j})-\sum_{1}^{k}u_{j}}{\alpha(\alpha-1)}}\medskip\\
&  ={\frac{k^{\alpha-1}\sum_{1}^{\left\lfloor k/2\right\rfloor }\left\lfloor
k/2\right\rfloor ^{-\alpha}-1}{\alpha(\alpha-1)}}\hspace*{2.8cm}\medskip\\
&  ={\frac{k^{\alpha-1}\left\lfloor k/2\right\rfloor /\left\lfloor
k/2\right\rfloor ^{\alpha}-1}{\alpha(\alpha-1)}}\medskip\\
&  ={\frac{\left(  k/\left\lfloor k/2\right\rfloor \right)  ^{\alpha-1}%
-1}{\alpha(\alpha-1)}}.
\end{align*}
Therefore the identifiably condition (\ref{6}) takes on the form
\begin{multline*}
D_{\alpha}(P_{n},U)\longrightarrow\\
\left\{
\begin{array}
[c]{ll}%
\displaystyle{\frac{2^{\alpha-1}-1}{\alpha(\alpha-1)}}\overset{def}{=}%
\Delta_{\alpha} & ,\mbox{if}\ \alpha>0,\ \alpha\neq1\medskip\medskip\\
\displaystyle\ln2\ \overset{def}{=}\Delta_{1} & \mbox{if}\ \alpha=1.
\end{array}
\right.
\end{multline*}
If $0<\alpha\leq1$ then Lemma \ref{Lemma2} implies
\[
g_{\alpha}(\Delta)=\ln\left(  1+\alpha(\alpha-1)\,\Delta\right)  /(\alpha-1)
\]
when $0<\alpha<1$ and $g_{1}(\Delta)=\Delta$ when $\alpha=1$. If moreover
(\ref{68}) then under the alternative (\ref{57})
\begin{align*}
\frac{g_{\alpha}(\Delta_{\alpha})}{g_{1}(\Delta_{1})}  &  ={\frac{\ln\left(
1+\alpha(\alpha-1)\,\frac{2^{\alpha-1}-1}{\alpha(\alpha-1)}\right)  }%
{(\alpha-1)\ln2}}\medskip\\
&  ={\frac{\ln\left(  1+2^{\alpha-1}-1\right)  }{(\alpha-1)\ln2}}=1.
\end{align*}
Hence, by Definition 4, the likelihood ratio statistic $\widehat{D}_{1,n}$ is
as Bahadur efficient as any $\widehat{D}_{\alpha,n}$ with $0<\alpha<1$. If
$\alpha>1$ then Lemma \ref{Lemma2} implies
\[
{\frac{g_{\alpha}(\Delta_{\alpha})}{g_{1}(\Delta_{1})}}={\frac{(2^{\alpha
-1}-1)^{1/\alpha}}{\ln2}}>1.
\]
However, contrary to this prevalence of $g_{\alpha}(\Delta_{\alpha})\ $over
$g_{1}(\Delta_{1})$, Theorem~\ref{Theorem2} implies that $\widehat{D}_{1,n}$
is infinitely more Bahadur efficient than $\widehat{D}_{\alpha,n}$.$\medskip$
\end{example}

\begin{example}
\label{Example3}Let us now consider the truncated geometric distribution%
\[
P_{n}=(p_{n1},\ldots,p_{nk})=c_{k}(p)(1,p,\ldots,p^{k})
\]
with parameter $p=p_{n}\in]0,1[$. Since%
\[
1+p+p^{2}+\ldots={\frac{1}{1-p}}\text{ \ \ and \ \ }p^{k+1}+p^{k+2}%
+\ldots={\frac{p^{k+1}}{1-p},}%
\]
it holds%
\[
1+p+\cdots+p^{k}={\frac{1}{1-p}}-{\frac{p^{k+1}}{1-p}}={\frac{1-p^{k+1}}%
{1-p}=}\frac{1}{c_{k}(p)}.
\]
Hence for all $\alpha\neq0,1$%
\begin{align*}
\alpha(\alpha-1)\,D_{\alpha,n}+1 &  ={\frac{1}{k}}\sum_{j=0}^{k}\left(
{\frac{p_{nj}}{1/k}}\right)  ^{\alpha}\\
&  ={\frac{1}{k}}\sum_{j=1}^{k}{\frac{k^{\alpha}(1-p)^{\alpha}p^{\alpha j}%
}{(1-p^{k+1})^{\alpha}}}\medskip\\
&  ={\frac{\left(  k(1-p)\right)  ^{\alpha}}{k(1-p^{k+1})^{\alpha}}}\sum
_{j=0}^{k}(p^{\alpha})^{j}\medskip\\
&  ={\frac{\left(  k(1-p)\right)  ^{\alpha}}{k(1-p^{k+1})^{\alpha}}}%
\cdot{\frac{1-p^{\alpha(k+1)}}{1-p^{\alpha}}}\medskip\\
&  ={\frac{\left(  k(1-p)\right)  ^{\alpha}}{k(1-p^{\alpha})}}\cdot
{\frac{1-p^{\alpha(k+1)}}{(1-p^{k+1})^{\alpha}}}.
\end{align*}
In the particular case $p=1-x/k$\ for $x\neq0$\ fixed we get $k(1-p)=x$ and
\begin{align*}
k(1-p^{\alpha})\  &  =\ k\left(  1-\left(  1-{\frac{\alpha x}{k}}+o\left(
{\frac{x}{k}}\right)  \right)  \right)  \longrightarrow\alpha x,\medskip\\
p^{\alpha(k+1)} &  =\left(  1-{\frac{x}{k}}\right)  ^{\alpha(k+1)}%
\longrightarrow e^{-x\alpha},\medskip\\
p^{k+1} &  =\left(  1-{\frac{x}{k}}\right)  ^{k+1}\longrightarrow e^{-x}.
\end{align*}
Therefore
\begin{align*}
\alpha(\alpha-1)\,D_{\alpha,n}+1 &  ={\frac{x^{\alpha}}{k\left(  {\frac{\alpha
x}{k}}+o\left(  {\frac{x}{k}}\right)  \right)  }}\cdot{\frac{1-e^{-x\alpha}%
}{(1-e^{-x})^{\alpha}}}\medskip\\
&  ={\frac{x^{\alpha}}{\alpha x+o(x)}}\cdot{\frac{e^{x\alpha}-1}%
{(e^{x}-1)^{\alpha}}}.
\end{align*}
Consequently,
\[
\alpha(\alpha-1)\,\Delta_{\alpha}+1={\frac{x^{\alpha-1}}{\alpha}}\cdot
{\frac{e^{x\alpha}-1}{(e^{x}-1)^{\alpha}}}%
\]
i.e.,%
\[
\Delta_{\alpha}=\frac{x^{\alpha-1}(e^{x\alpha}-1)-\alpha(e^{x}-1)^{\alpha}%
}{\alpha^{2}(\alpha-1)(e^{x}-1)^{\alpha}}\text{ \ \ for }\alpha\neq0,1.
\]
By the L'Hospital rule,%
\begin{align*}
\Delta_{1}  & =\ln\frac{x}{e(e^{x}-1)}+\frac{xe^{x}}{e^{x}-1}~\text{,}\\
\Delta_{0}  & =\frac{\ln(e^{x}-1)-\ln x}{2}~.
\end{align*}
From here one can deduce that if $x\rightarrow0$ then
\[
\Delta_{\alpha}\longrightarrow0\text{ \ \ for all \ \ }\alpha\in\mathbb{R}.
\]
If $x=1$ then
\[
\Delta_{\alpha}={\frac{e^{\alpha}-1-(e-1)^{\alpha}}{\alpha^{2}(\alpha
-1).(e-1)^{\alpha}}}\text{ \ \ for }\alpha\neq0,1
\]
and%
\begin{align*}
\Delta_{1} &  =\frac{1-(e-1)\ln(e-1)}{e-1}=0.035,\ \ \ \ \\
\Delta_{0} &  =\frac{\ln(e-1)}{2}=0.271.
\end{align*}
Using Lemma 2 and Theorem 2 in a similar manner as in the previous example, we
find that here $\widehat{D}_{1,n}$ is more Bahadur efficient as any
$\widehat{D}_{\alpha,n}$ with $0<\alpha<\infty,$\ $\alpha\neq1.$
\end{example}

\section{Contiguity}

In this paper we proved that the statistics $\hat{D}_{\alpha,n}$ of orders
$\alpha>1$ are less Bahadur efficient than those of the orders $0<\alpha\leq1$
and that the latter are mutually comparable in the Bahadur sense. One may have
expected $\hat{D}_{1,n}$ to be much more Bahadur efficient than $\hat
{D}_{\alpha,n}$ for$\ 0<\alpha<1.$ In order to understand why this is not the
case we have to examine somewhat closer the assumptions of our theory.

Recall that given a sequence of pairs of probability measures $\left(
P_{n},Q_{n}\right)  _{n\in\mathbb{N}},$ $\left(  P_{n}\right)  _{n\in
\mathbb{N}}$ is said to be contiguous with respect to $\left(  Q_{n}\right)
_{n\in\mathbb{N}}$ if $Q_{n}\left(  A_{n}\right)  \rightarrow0$ for
$n\rightarrow\infty$ implies $P_{n}\left(  A_{n}\right)  \rightarrow0$ for
$n\rightarrow\infty$ and any sequence of sets $\left(  A_{n}\right)
_{n\in\mathbb{N}}.$ When $\left(  P_{n}\right)  _{n\in\mathbb{N}}$ is
contiguous with respect to $\left(  Q_{n}\right)  _{n\in\mathbb{N}}$ we write
$P_{n}\vartriangleleft Q_{n}.$ Let $P$ and $Q$ be probability measures on the
same set $\mathcal{X}$ and let $\left(  \mathcal{F}_{n}\right)  _{n\in
\mathbb{N}}$ be an increasing sequence of finite sub-$\sigma$-algebras on
$\mathcal{X}$ that generates the full $\sigma$-algebra on $\mathcal{X}.$ If
$P_{n}=P_{\mid\mathcal{F}_{n}}$ and $Q_{n}=Q_{\mid\mathcal{F}_{n}}$ then
$P_{n}\vartriangleleft Q_{n}$ if and only if $P\ll Q$ where $\ll$ denotes
absolute continuity. For completeness we give the proof of the following
simple proposition.$\medskip$

\begin{proposition}
Let $\left(  P_{n},Q_{n}\right)  _{n\in\mathbb{N}}$ denote a sequence of pairs
of probability measures and assume that the sequence $D_{1}\left(  P_{n}%
,Q_{n}\right)  $ is bounded. Then $P_{n}\vartriangleleft Q_{n}.\medskip$
\end{proposition}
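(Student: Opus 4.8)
The plan is to deduce contiguity from an explicit quantitative bound that controls $P_n(A_n)$ by $Q_n(A_n)$ together with the divergence $D_1(P_n,Q_n)$; once such a bound is in hand the conclusion is immediate. So I would fix an arbitrary sequence of measurable sets $A_n$ with $Q_n(A_n)\to 0$, set $C=\sup_n D_1(P_n,Q_n)<\infty$, and aim to show $P_n(A_n)\to 0$.

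The first step is to reduce each $D_1(P_n,Q_n)$ to a two-point divergence. For a fixed $n$ and the event $A=A_n$, the partition $\{A,A^{c}\}$ is a coarse-graining of the underlying space, and $D_1$ is monotone under coarse-graining (equivalently, one applies the log-sum inequality to the two groups of indices determined by $A$ and $A^{c}$). This yields
\[
D_1(P_n,Q_n)\ \ge\ d\bigl(P_n(A)\,\Vert\,Q_n(A)\bigr),
\]
where $d(p\Vert q)=p\ln\frac{p}{q}+(1-p)\ln\frac{1-p}{1-q}$ is the binary information divergence. This step is valid for arbitrary probability measures, not just the discrete ones of the main text, since it only uses the data-processing property of $D_1$. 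The degenerate case $Q_n(A_n)=0$ needs no separate treatment: it would force $D_1(P_n,Q_n)=\infty$ unless $P_n(A_n)=0$, so boundedness already gives $P_n(A_n)=0$ there.

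Next I would bound the binary divergence from below. Writing $d(p\Vert q)=-H_b(p)-p\ln q-(1-p)\ln(1-q)$ with $H_b(p)=-p\ln p-(1-p)\ln(1-p)\le\ln 2$, and discarding the nonnegative term $-(1-p)\ln(1-q)\ge 0$ (nonnegative because $\ln(1-q)\le 0$), I obtain $d(p\Vert q)\ge p\ln(1/q)-\ln 2$. Combining this with the reduction above and rearranging gives, for every $n$ large enough that $0<Q_n(A_n)<1$,
\[
P_n(A_n)\ \le\ \frac{D_1(P_n,Q_n)+\ln 2}{\ln\bigl(1/Q_n(A_n)\bigr)}\ \le\ \frac{C+\ln 2}{\ln\bigl(1/Q_n(A_n)\bigr)}.
\]

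Finally, since $Q_n(A_n)\to 0$ forces $\ln\bigl(1/Q_n(A_n)\bigr)\to\infty$, the right-hand side tends to $0$, whence $P_n(A_n)\to 0$. As the sequence $A_n$ was arbitrary, this is precisely the definition of $P_n\vartriangleleft Q_n$. The only points requiring care are in the first step, namely that the monotonicity (log-sum) reduction is legitimate for general, possibly non-discrete, measures and that $Q_n(A_n)<1$ eventually so the denominator stays positive; both are routine, so I do not expect a genuine obstacle, the whole argument being essentially a one-line consequence of the data-processing inequality for $D_1$ plus an elementary lower bound on the Bernoulli divergence.
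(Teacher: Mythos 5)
Your proof is correct and complete, but it is worth noting that it does genuinely more than the paper's own proof, which is truncated: the paper merely opens a proof by contradiction --- choosing $\varepsilon>0$ and a subsequence $(A_{n_k})_{k\in\mathbb{N}}$ with $Q_{n_k}(A_{n_k})\rightarrow 0$ and $P_{n_k}(A_{n_k})\geq\varepsilon$ --- and then stops without ever deriving the contradiction. Your argument supplies exactly the missing substance. The data-processing reduction to the two-point partition $\{A,A^{c}\}$ (valid for arbitrary probability measures, as you observe, since it is just the log-sum inequality) together with your lower bound $d(p\Vert q)\geq p\ln(1/q)-\ln 2$ is what the paper's contradiction would need: applied along its subsequence it gives $D_{1}(P_{n_k},Q_{n_k})\geq\varepsilon\ln\bigl(1/Q_{n_k}(A_{n_k})\bigr)-\ln 2\rightarrow\infty$, contradicting boundedness. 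Your direct formulation is arguably preferable to the intended contradiction argument, since the explicit inequality $P_{n}(A_{n})\leq\bigl(C+\ln 2\bigr)/\ln\bigl(1/Q_{n}(A_{n})\bigr)$ avoids passing to subsequences and even quantifies the contiguity: it shows $P_{n}(A_{n})$ decays at rate $O\bigl(1/\ln(1/Q_{n}(A_{n}))\bigr)$. Your treatment of the degenerate case $Q_{n}(A_{n})=0$ is also sound --- finiteness of $D_{1}(P_{n},Q_{n})$ forces $P_{n}\ll Q_{n}$, hence $P_{n}(A_{n})=0$ there --- and since $Q_{n}(A_{n})\rightarrow 0$ guarantees $Q_{n}(A_{n})<1$ eventually, the denominator is eventually positive, so no step fails.
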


\begin{proof}
Assume that the proposition is false. Then there exist $\varepsilon>0$ and a
subsequence of sets $\left(  A_{n_{k}}\right)  _{k\in\mathbb{N}}$ such that
$Q_{n_{k}}\left(  A_{n_{k}}\right)  \rightarrow0$ for $k\rightarrow\infty$ and
$P_{n_{k}}\left(  A_{n_{k}}\right)  \geq\varepsilon$ for all $k\in\mathbb{N}.$
\end{proof}

In general, a large power $\alpha$ makes the power divergence $D_{a}\left(
P,Q\right)  $ sensitive to large values of $dP/dQ.$ Therefore the statistics
$\widehat{D}_{\alpha,n}$ with large $\alpha$ should be used when the sequence
of alternatives $P_{n}$ may not be contiguous with respect to the sequence of
hypotheses $Q_{n}.$ Conversely, a small power $\alpha$ makes $D_{a}\left(
P,Q\right)  $ sensitive to small values of $dP/dQ.$ Therefore $\widehat
{D}_{\alpha,n}$ with small $\alpha$ should be used when the sequence of
hypotheses $Q_{n}$ is not contiguous with respect to the sequence alternatives
$P_{n}.$ Our conditions guarantee $P_{n}\vartriangleleft Q_{n}$ but not the
reversed contiguity $Q_{n}\vartriangleleft P_{n}.$ We see that a substantial
modification of the conditions is needed in order to guarantee that $\hat
{D}_{1,n}$ dominates the divergence statistcs $\hat{D}_{\alpha,n}$ of the
orders$\ 0<\alpha<1$ in the Bahadur sense$.$

\section{Appendix: Relations to previous results}

As mentioned at the end of Section II, Harremo\"{e}s and Vajda
\cite{Harremoes2008} assumed the same strong consistency as in Definition 4+
but introduced the Bahadur efficiency by the formula (\ref{19}). The next four
lemmas help to clarify the relation between this and the present precised
concept of Bahadur efficiency (\ref{18}).

Under the assumptions of Definition~4,\ \cite{Harremoes2008} considered the
following conditions.$\medskip$

\begin{description}
\item[\textbf{C1}:] The limit $\bar{c}_{\alpha_{2}/\alpha_{1}}$ considered in
(\ref{20}) exists.

\item[\textbf{C2:}] Both statistics $\widehat{D}_{\alpha_{i},n}$\ are strongly
consistent and both functions $g_{\alpha_{i}}$ are strongly Bahadur.$\medskip$
\end{description}

\begin{lemma}
\label{Lemma3}Let the assumptions of Definition~\ref{Def3} hold. Under
\textbf{C1} the Bahadur efficiency (\ref{19})\ coincides with the present
Bahadur efficiency (\ref{18}). If moreover \textbf{C2} holds then
(\ref{19})\ is the Bahadur efficiency in the strong sense.$\medskip$
\end{lemma}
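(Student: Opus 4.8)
The plan is to reduce the whole statement to a single comparison. Both (\ref{18}) and (\ref{19}) carry the identical prefactor $g_{\alpha_1}(\Delta_{\alpha_1})/g_{\alpha_2}(\Delta_{\alpha_2})$, which is well defined once the hypotheses of Definition~\ref{Def3} are in force. Hence the first assertion is equivalent to showing that, under \textbf{C1}, the matched-sample limit $c_{\alpha_2/\alpha_1}$ of (\ref{17}) coincides with the common-sample limit $\bar{c}_{\alpha_2/\alpha_1}$ of (\ref{20}).

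To compare the two, I would start from the defining relation (\ref{16}) of the matched sample sizes $m_n$, namely
\[
\frac{m_n}{n}=\frac{g_{\alpha_1}(\Delta_{\alpha_1})}{g_{\alpha_2}(\Delta_{\alpha_2})}\cdot\frac{c_{\alpha_2}(m_n)}{c_{\alpha_1}(n)}\bigl(1+o(1)\bigr),
\]
so that $m_n/n$ converges to the efficiency (\ref{18}), and then split the crucial ratio as
\[
\frac{c_{\alpha_2}(m_n)}{c_{\alpha_1}(n)}=\frac{c_{\alpha_2}(m_n)}{c_{\alpha_2}(n)}\cdot\frac{c_{\alpha_2}(n)}{c_{\alpha_1}(n)}.
\]
By \textbf{C1} the second factor tends to $\bar{c}_{\alpha_2/\alpha_1}$, so the desired identity $c_{\alpha_2/\alpha_1}=\bar{c}_{\alpha_2/\alpha_1}$ reduces to the claim that the first factor tends to $1$. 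When $\bar{c}_{\alpha_2/\alpha_1}\in\{0,\infty\}$ this is immediate, since $m_n\to\infty$ forces $c_{\alpha_2}(m_n)$ to vanish or diverge in step with $c_{\alpha_2}(n)$; in the remaining range $0<\bar{c}_{\alpha_2/\alpha_1}<\infty$ the quotient $m_n/n$ stays bounded away from $0$ and $\infty$, and I would invoke the regularity of the generating sequence $c_{\alpha_2}$ to conclude that evaluating it at the shifted index $m_n$ rather than $n$ leaves the limit unchanged. For the concrete sequences of this paper --- constant for $0<\alpha\le1$ and of the form (\ref{60}) for $\alpha>1$ --- this third case collapses, because there $\bar{c}_{\alpha_2/\alpha_1}$ can only equal $1$ or $\infty$, and substituting back yields (\ref{18})\,$=$\,(\ref{19}).

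For the second assertion I would appeal to the Motivation of Definition~\ref{Def3+}: under \textbf{C2} the statistics are strongly consistent and $g_{\alpha_1},g_{\alpha_2}$ are strongly Bahadur, whence the Quine--Robinson argument identifies the efficiency (\ref{18}) with the Bahadur efficiency in the strong sense. Combined with the first part, (\ref{19}) then expresses this same strong-sense efficiency.

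The step I expect to be the main obstacle is precisely the intermediate case $0<\bar{c}_{\alpha_2/\alpha_1}<\infty$: there is no reason \emph{a priori} that replacing the argument $n$ by a bounded multiple $m_n$ should preserve the limit of $c_{\alpha_2}$, so this must be read off from the explicit structure of the generating sequences rather than from \textbf{C1} alone.
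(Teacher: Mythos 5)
Your reduction is the right one, and it supplies substantially more than the paper's own proof, which is a two-sentence affair: the first assertion is simply declared ``clear from (\ref{19}) and (\ref{18})'', and the second is obtained by noting that \textbf{C2} makes the hypotheses of Definition~\ref{Def3+} hold, so that the Quine--Robinson identification sketched in the motivation of that definition applies --- your handling of the second assertion coincides with this exactly. For the first assertion you correctly isolate the genuine mathematical content, namely the identity $c_{\alpha_2/\alpha_1}=\bar{c}_{\alpha_2/\alpha_1}$ between the matched-index limit of (\ref{17}) and the common-index limit of (\ref{20}), which the paper passes over in silence; and your closing observation that this identity cannot be extracted from \textbf{C1} alone, but must be read off the explicit generating sequences, is precisely the role played in the paper by Lemmas~\ref{Lemma4}--\ref{Lemma6} and Examples~\ref{Example4}--\ref{Example5}, where the equality is verified case by case (both limits being $\infty$, or both trivial) for the concrete $c_{\alpha}(n)$ of Lemmas~\ref{Lemma2a} and~\ref{Lemma2}.

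One intermediate step is however false as stated: in the extreme cases $\bar{c}_{\alpha_2/\alpha_1}\in\{0,\infty\}$ it is \emph{not} immediate that $c_{\alpha_2}(m_n)/c_{\alpha_2}(n)\to1$. Take $c_{\alpha_1}(n)=1$ and $c_{\alpha_2}(n)=n^{1/2}$, and write $\gamma=g_{\alpha_1}(\Delta_{\alpha_1})/g_{\alpha_2}(\Delta_{\alpha_2})$; then (\ref{15}) forces $m_n\sim\gamma^{2}n^{2}$, and the first factor in your splitting tends to $\infty$, not to $1$ --- the identity $c_{\alpha_2/\alpha_1}=\bar{c}_{\alpha_2/\alpha_1}=\infty$ survives only because the \emph{product} still diverges, not because the factor stabilizes. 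Worse, without structural restrictions the identity itself can fail in the $\infty$ case: for $c_{\alpha_1}(n)=e^{-2n}$ and $c_{\alpha_2}(n)=e^{-n}$ one has $\bar{c}_{\alpha_2/\alpha_1}=\infty$, yet (\ref{15}) gives $m_n=2n+O(\ln n)$ and a \emph{finite} $c_{\alpha_2/\alpha_1}$, so (\ref{18}) and (\ref{19}) would disagree. Such sequences cannot arise from actual error probabilities in the present model (since $-\ln\mathsf{e}_{\alpha,n}$ grows at most like $n\ln(k/\varrho)$, which bounds $c_{\alpha}(n)$ away from rapid decay), but they show that the ``immediate'' clause needs replacing. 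The correct repair is the one you already gesture at in your final paragraph, made unconditional: for the sequences actually in play --- constant for $0<\alpha\leq1$ and of the form (\ref{60}) for $\alpha>1$ --- first check, as the paper does in Example~\ref{Example4}, that (\ref{15}) cannot hold with $\liminf_n m_n<\infty$, hence $m_n\to\infty$; then both limits equal $1$ when $\alpha_1,\alpha_2\leq1$ and both equal $\infty$ when $\alpha_2>1$, and the coincidence of (\ref{18}) with (\ref{19}) follows. With that substitution your argument is complete and, unlike the paper's, actually proves the first assertion rather than asserting it.
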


\begin{proof}
The first assertion is clear from (\ref{19})\ and (\ref{18}). Under
\textbf{C2} the assumptions of Definition 3+ hold. Hence the second assertion
follows from Definition \ref{Def3+}.$\medskip$
\end{proof}

\begin{lemma}
\label{Lemma4}Let the\ assumptions of Definition 3 hold\ and let
$b(\alpha):\mathcal{I}\longrightarrow]0,1[$ be increasing and $d_{\alpha
}:\mathcal{I}\longrightarrow]0,\infty\lbrack$\ arbitrary function on an
interval $\mathcal{I}$\ covering $\{\alpha_{1},\alpha_{2}\}$. If for every
$\alpha\in\{\alpha_{1},\alpha_{2}\}$\ the sequence $c_{\alpha}(n)$\ generating
the Bahadur function $g_{\alpha}$ satisfies the asymptotic condition%
\begin{equation}
c_{\alpha}(n)=n^{b(\alpha)}(d_{\alpha}+o(1)) \label{21}%
\end{equation}
then (\ref{17}) holds for $c_{\alpha_{2}/\alpha_{1}}=\infty$ and condition
\textbf{C1} is satisfied.$\medskip$
\end{lemma}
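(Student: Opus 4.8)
The plan is to verify the two assertions separately. I would dispose of condition \textbf{C1} first, since it is a one-line computation: inserting the hypothesized asymptotics (\ref{21}) into the ratio (\ref{20}) gives
\[
\frac{c_{\alpha_{2}}(n)}{c_{\alpha_{1}}(n)}=n^{\,b(\alpha_{2})-b(\alpha_{1})}\cdot\frac{d_{\alpha_{2}}+o(1)}{d_{\alpha_{1}}+o(1)}.
\]
Since $b$ is increasing and $\alpha_{1}<\alpha_{2}$, the exponent $b(\alpha_{2})-b(\alpha_{1})$ is strictly positive, while the second factor tends to the finite positive constant $d_{\alpha_{2}}/d_{\alpha_{1}}$. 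Hence the ratio diverges, the limit (\ref{20}) exists in the extended sense, and \textbf{C1} holds with $\bar{c}_{\alpha_{2}/\alpha_{1}}=\infty$.

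For (\ref{17}) the difficulty is that $m_{n}$ is only defined implicitly through (\ref{15}), with $c_{\alpha_{2}}(m_{n})$ occurring on the left. Writing $\kappa=g_{\alpha_{1}}(\Delta_{\alpha_{1}})/g_{\alpha_{2}}(\Delta_{\alpha_{2}})>0$ and substituting (\ref{21}) on both sides, relation (\ref{15}) becomes
\[
\frac{m_{n}^{\,1-b(\alpha_{2})}}{d_{\alpha_{2}}+o(1)}=\kappa\,\frac{n^{\,1-b(\alpha_{1})}}{d_{\alpha_{1}}+o(1)}\,(1+o(1)).
\]
The right-hand side tends to $\infty$ because $b(\alpha_{1})<1$; this forces $m_{n}\to\infty$ and thereby justifies the very substitution just performed, so I would record it first. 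Solving for $m_{n}$ then yields
\[
m_{n}=\bigl(\kappa\,d_{\alpha_{2}}/d_{\alpha_{1}}\bigr)^{1/(1-b(\alpha_{2}))}\,n^{\gamma}\,(1+o(1)),\qquad\gamma=\frac{1-b(\alpha_{1})}{1-b(\alpha_{2})},
\]
and since $0<b(\alpha_{1})<b(\alpha_{2})<1$ the exponent $\gamma$ exceeds $1$, so $m_{n}$ grows strictly faster than $n$.

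I would then feed this estimate back into the ratio appearing in (\ref{17}). Using $m_{n}\asymp n^{\gamma}$ together with (\ref{21}) once more,
\[
\frac{c_{\alpha_{2}}(m_{n})}{c_{\alpha_{1}}(n)}\asymp n^{\,\gamma\,b(\alpha_{2})-b(\alpha_{1})},
\]
so everything hinges on the sign of the exponent, which simplifies to
\[
\gamma\,b(\alpha_{2})-b(\alpha_{1})=\frac{b(\alpha_{2})-b(\alpha_{1})}{1-b(\alpha_{2})}>0.
\]
The ratio therefore diverges, which is precisely (\ref{17}) with $c_{\alpha_{2}/\alpha_{1}}=\infty$. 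The main obstacle is this inversion of (\ref{15}): one must first secure $m_{n}\to\infty$ before replacing $c_{\alpha_{2}}(m_{n})$ by its power-law form, and then track the $o(1)$ factors through the inversion so that they disturb only the multiplicative constant and not the leading power $n^{\gamma}$. Once $m_{n}\asymp n^{\gamma}$ is established, the closing exponent identity is routine.
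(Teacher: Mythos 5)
Your proof is correct and follows essentially the same route as the paper: substitute the power-law asymptotics (\ref{21}) into (\ref{15}) to get $m_n^{1-b(\alpha_2)}\asymp n^{1-b(\alpha_1)}$, hence $m_n\asymp n^{(1-b(\alpha_1))/(1-b(\alpha_2))}$, and conclude that $c_{\alpha_2}(m_n)/c_{\alpha_1}(n)$ grows like $n^{(b(\alpha_2)-b(\alpha_1))/(1-b(\alpha_2))}\longrightarrow\infty$, exactly the exponent the paper obtains. You are in fact slightly more careful than the paper, which neither verifies \textbf{C1} explicitly nor records that $m_n\to\infty$ must be secured before replacing $c_{\alpha_2}(m_n)$ by its asymptotic form (a point the authors acknowledge only later, in Example \ref{Example4}).
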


\begin{proof}
Under (\ref{21}) it suffices to prove that (\ref{17}) holds for $c_{\alpha
_{2}/\alpha_{1}}=\infty,$ i.e.
\begin{equation}
\lim_{n\longrightarrow\infty}\frac{c_{\alpha_{2}}(m_{n})}{c_{\alpha_{1}}%
(n)}=\infty\label{22}%
\end{equation}
for $m_{n}$\ defined by (\ref{15}). By (\ref{21}),%
\[
c_{\alpha_{2}}(m_{n})=m_{n}^{b(\alpha_{2})}(d_{\alpha_{2}}+o(1))
\]
and
\[
c_{\alpha_{1}}(n)=n^{b(\alpha_{1})}(d_{\alpha_{1}}+o(1))
\]
so that (\ref{15}) implies
\[
m_{n}^{1-b(\alpha_{2})}=n^{1-b(\alpha_{1})}(\gamma\delta+o(1))
\]
for the finite positive constants
\[
\delta=\frac{d_{\alpha_{1}}}{d_{\alpha_{2}}}\text{ \ \ and \ \ }\gamma
={\frac{g_{\alpha_{1}}(\Delta_{\alpha_{1}})}{g_{\alpha_{2}}(\Delta_{\alpha
_{2}})}.}%
\]
Hence (\ref{16}) implies%
\begin{align*}
\frac{c_{\alpha_{2}}(m_{n})}{c_{\alpha_{1}}(n)}  & =\frac{m_{n}}{n}%
(\gamma^{-1}+o(1))\medskip\\
& =\frac{n^{\frac{1-b(\alpha_{1})}{1-b(\alpha_{2})}}}{n}((\gamma\delta
)^{\frac{1}{1-b(\alpha_{2})}}\gamma^{-1}+o(1))\medskip\\
& =n^{\frac{b(\alpha_{2})-b(\alpha_{1})}{1-b(\alpha_{2})}}(\gamma
^{\frac{b(\alpha_{2})}{1-b(\alpha_{2})}}\delta^{\frac{1}{1-b(\alpha_{2})}%
}+o(1))
\end{align*}
so that (\ref{22}) holds. $\medskip$
\end{proof}

\begin{lemma}
\label{Lemma5}Let the\ assumptions of Definition \ref{Def3}\ hold\ and let for
every $\alpha\in\{\alpha_{1},\alpha_{2}\}$\ the sequence $c_{\alpha}%
(n)$\ generating the Bahadur function $g_{\alpha}$ satisfy the asymptotic
condition%
\begin{equation}
c_{\alpha}(n)=\frac{\alpha n^{b(\alpha)}}{\ln n} \label{23}%
\end{equation}
for some increasing function $b(\alpha):\mathcal{I}\longrightarrow]0,1[$ on an
interval $\mathcal{I}$\ covering $\{\alpha_{1},\alpha_{2}\}$ . Then (\ref{17})
holds for $c_{\alpha_{2}/\alpha_{1}}=\infty$ and condition \textbf{C1} is
satisfied.$\medskip$
\end{lemma}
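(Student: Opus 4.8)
The plan is to follow the argument of Lemma~\ref{Lemma4} essentially verbatim; the only new feature in (\ref{23}) is the slowly varying factor $1/\ln n$, which does not alter the dominant polynomial growth but must be carried carefully through the logarithms. Before treating (\ref{17}) I would dispose of condition \textbf{C1}, which is immediate: forming the ratio $c_{\alpha_2}(n)/c_{\alpha_1}(n)$ from (\ref{23}) the $1/\ln n$ factors cancel exactly, leaving $(\alpha_2/\alpha_1)\,n^{b(\alpha_2)-b(\alpha_1)}$. Since $b$ is increasing and $\alpha_1<\alpha_2$, the exponent $b(\alpha_2)-b(\alpha_1)$ is strictly positive, so this ratio tends to $\infty$; hence the limit $\bar c_{\alpha_2/\alpha_1}$ in (\ref{20}) exists (and equals $\infty$), so \textbf{C1} holds.

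To establish (\ref{17}) with $c_{\alpha_2/\alpha_1}=\infty$ I would substitute (\ref{23}) into the defining relation (\ref{15}) for $m_n$. Writing $\gamma=g_{\alpha_1}(\Delta_{\alpha_1})/g_{\alpha_2}(\Delta_{\alpha_2})$ as in the proof of Lemma~\ref{Lemma4}, this yields
\[
m_n^{\,1-b(\alpha_2)}\ln m_n=\frac{\alpha_2\gamma}{\alpha_1}\,n^{\,1-b(\alpha_1)}\ln n\,(1+o(1)).
\]
Taking logarithms of both sides and dividing through by $\ln n$, the iterated-logarithm terms $\ln\ln m_n/\ln n$ and $\ln\ln n/\ln n$, together with the bounded constant divided by $\ln n$, all vanish in the limit, leaving $\ln m_n/\ln n\to (1-b(\alpha_1))/(1-b(\alpha_2))=:r$. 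Because $b(\alpha_1)<b(\alpha_2)$ we have $r>1$.

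Finally I would read off the conclusion from (\ref{16}), which gives $c_{\alpha_2}(m_n)/c_{\alpha_1}(n)=\gamma^{-1}(m_n/n)(1+o(1))$. Since $\ln(m_n/n)=(\ln m_n/\ln n-1)\ln n\to\infty$ by $r>1$, it follows that $m_n/n\to\infty$ and therefore $c_{\alpha_2}(m_n)/c_{\alpha_1}(n)\to\infty$, which is exactly (\ref{17}) with $c_{\alpha_2/\alpha_1}=\infty$. The one point requiring genuine care, and the only substantive departure from Lemma~\ref{Lemma4}, is justifying that $\ln\ln m_n/\ln n\to 0$: this presupposes that $\ln m_n$ is comparable to $\ln n$, which is not given a priori but follows self-consistently from the displayed relation once one observes $m_n\to\infty$. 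Pinning down the limit $r$ of $\ln m_n/\ln n$ before the $o(1)$ and double-logarithmic contributions can legitimately be discarded is thus the main obstacle, whereas in Lemma~\ref{Lemma4} the constants $d_\alpha\in\,]0,\infty[$ made the polynomial orders dominate without any such bootstrapping.
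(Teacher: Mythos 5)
Your proof is correct and takes essentially the same route as the paper's: substitute (\ref{23}) into (\ref{15}), deduce that $m_n/n\longrightarrow\infty$ (the paper reads this off directly from $1-b(\alpha_2)<1-b(\alpha_1)$, you via the limit $\ln m_n/\ln n\rightarrow(1-b(\alpha_1))/(1-b(\alpha_2))>1$), and then conclude (\ref{22}) from (\ref{16}). Incidentally, your substituted relation $m_n^{1-b(\alpha_2)}\ln m_n=\frac{\alpha_2\gamma}{\alpha_1}\,n^{1-b(\alpha_1)}\ln n\,(1+o(1))$ is the correct one (the paper's corresponding display misplaces the $\ln$ factors in the denominators), and your explicit bootstrap showing $\ln\ln m_n/\ln n\rightarrow 0$ together with the direct verification of \textbf{C1} merely tidies up points the paper leaves implicit.
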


\begin{proof}
Similarly as before, it suffices to prove the relation (\ref{22}) for $m_{n}%
$\ defined by (\ref{15}). By (\ref{23}),%
\[
c_{\alpha_{2}}(m_{n})=\frac{\alpha_{2}m_{n}^{b(\alpha_{2})}}{\ln m_{n}}\text{
\ \ and \ \ }c_{\alpha_{1}}(n)=\frac{\alpha_{1}n^{b(\alpha_{1})}}{\ln n}%
\]
so that (\ref{15}) implies
\[
\frac{\alpha_{2}m_{n}^{1-b(\alpha_{2})}}{\ln m_{n}}=\frac{\alpha
_{1}n^{1-b(\alpha_{1})}}{\ln n}(\gamma+o(1))
\]
for the same $\gamma$\ as in the previous proof. Since $1-b(\alpha
_{2})<1-b(\alpha_{1})$, this implies the asymptotic relation
\begin{equation}
\frac{m_{n}}{n}\longrightarrow\infty{.} \label{24}%
\end{equation}
Similarly as in the previous proof, we get from (\ref{16})%
\begin{multline*}
\frac{c_{\alpha_{2}}(m_{n})}{c_{\alpha_{1}}(n)}=\frac{m_{n}}{n}(\gamma
^{-1}+o(1))\medskip=\\
\left(  \frac{\alpha_{1}\ln m_{n}}{\alpha_{2}\ln n}\right)  ^{\frac
{1}{1-b(\alpha_{2})}}n^{\frac{b(\alpha_{2})-b(\alpha_{1})}{1-b(\alpha_{2})}%
}\left(
\begin{array}
[c]{c}%
\gamma^{\frac{b(\alpha_{2})}{1-b(\alpha_{2})}}\\
+o(1)
\end{array}
\right)  \medskip\medskip\\
>n^{\frac{b(\alpha_{2})-b(\alpha_{1})}{1-b(\alpha_{2})}}(\gamma^{\frac
{b(\alpha_{2})}{1-b(\alpha_{2})}}+o(1)).
\end{multline*}
Therefore the desired relation (\ref{22}) holds.$\medskip$
\end{proof}

\begin{lemma}
\label{Lemma6}Let the\ assumptions of Definition 3\ hold\ and let for every
$\alpha\in\{\alpha_{1},\alpha_{2}\}$\ the sequence $c_{\alpha}(n)$\ generating
the Bahadur function $g_{\alpha}$ satisfy the asymptotic condition%
\begin{equation}
c_{\alpha}(n)=\frac{\alpha k^{b(\alpha)}}{\ln k} \label{25}%
\end{equation}
where $k=k_{n}\longrightarrow\infty$\ is the sequence considered above and
$b(\alpha):\mathcal{I}\longrightarrow]0,\infty\lbrack$ is increasing on an
interval $\mathcal{I}$\ covering$\{\alpha_{1},\alpha_{2}\}$ . Then (\ref{17})
holds for $c_{\alpha_{2}/\alpha_{1}}=\infty$ and condition \textbf{C1} is
satisfied.$\medskip$
\end{lemma}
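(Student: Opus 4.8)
The plan is to mirror the proofs of Lemmas~\ref{Lemma4} and \ref{Lemma5}. There are two things to establish: that condition \textbf{C1} holds, which is a one-line computation of the ratio $c_{\alpha_2}(n)/c_{\alpha_1}(n)$, and that (\ref{17}) holds with $c_{\alpha_2/\alpha_1}=\infty$, which I would reduce, exactly as before, to showing that the equivalent sample size $m_n$ defined through (\ref{15}) satisfies $m_n/n\longrightarrow\infty$.

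First I would dispose of \textbf{C1}. Writing $\beta_i=b(\alpha_i)$ and inserting (\ref{25}) gives
\[
\frac{c_{\alpha_2}(n)}{c_{\alpha_1}(n)}=\frac{\alpha_2\,k^{\beta_2}/\ln k}{\alpha_1\,k^{\beta_1}/\ln k}=\frac{\alpha_2}{\alpha_1}\,k^{\beta_2-\beta_1}.
\]
Since $b$ is increasing and $\alpha_1<\alpha_2$ we have $\beta_2>\beta_1$, while $k=k_n\longrightarrow\infty$, so this ratio tends to $\infty$. Hence the limit (\ref{20}) exists as $+\infty$, which is precisely \textbf{C1}, and $\bar c_{\alpha_2/\alpha_1}=\infty$.

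Next, as in the proof of Lemma~\ref{Lemma5}, relation (\ref{16}) yields $c_{\alpha_2}(m_n)/c_{\alpha_1}(n)=\gamma^{-1}(m_n/n)(1+o(1))$ with $\gamma=g_{\alpha_1}(\Delta_{\alpha_1})/g_{\alpha_2}(\Delta_{\alpha_2})$, so the target (\ref{22}) is equivalent to $m_n/n\longrightarrow\infty$. To obtain the latter I would substitute (\ref{25}) into (\ref{15}). The only new feature compared with Lemma~\ref{Lemma5} is that here $c_\alpha$ depends on the sample size only through the cell count $k=k_n$, so the substitution $c_{\alpha_2}(m_n)$ brings in $k_{m_n}$ rather than $m_n$ directly. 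After this substitution (\ref{15}) reads, up to the constant $\gamma\alpha_2/\alpha_1$ and logarithmic factors,
\[
\frac{m_n}{n}\;\asymp\;\frac{k_{m_n}^{\,\beta_2}}{k_n^{\,\beta_1}},
\]
and since $\beta_2>\beta_1$ with $k\longrightarrow\infty$ the right-hand side diverges, giving $m_n/n\longrightarrow\infty$ and hence (\ref{22}).

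The step I expect to be the main obstacle is exactly this last substitution: because $c_\alpha$ is indexed through $k=k_n$, I must control $k_{m_n}$ against $k_n$ before asserting that the displayed ratio blows up. The clean way to handle it is to observe that in the Bahadur comparison both statistics act on the \emph{same} quantization, so the cell count entering the two normalizations is the common $k=k_n$; with $k$ held fixed, (\ref{15}) collapses to $m_n/n=\gamma(\alpha_2/\alpha_1)\,k^{\beta_2-\beta_1}(1+o(1))$, which tends to $\infty$ at once and makes $c_{\alpha_2}(m_n)/c_{\alpha_1}(n)\longrightarrow\infty$ immediate. If one insists on re-evaluating the cell count at $m_n$, one instead invokes the monotonicity of $(k_n)$ from \textbf{A1} together with its sub-linear growth under the standing rate conditions to bound $k_{m_n}/k_n$ and keep the positive exponent $\beta_2-\beta_1$ dominant. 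Either route delivers $m_n/n\longrightarrow\infty$, establishing (\ref{17}) with $c_{\alpha_2/\alpha_1}=\infty$ and completing the proof.
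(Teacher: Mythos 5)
Your proposal is correct at the paper's own level of rigor, and it reaches the conclusion by a more explicit route than the paper does. The paper's proof is a three-line reduction: it re-indexes the whole comparison by the cell count, taking $c_{\alpha_{1}}(k)=\alpha_{1}k^{b(\alpha_{1})}/\ln k$ and $c_{\alpha_{2}}(m_{k})=\alpha_{2}m_{k}^{b(\alpha_{2})}/\ln m_{k}$ with $m_{k}$ defined by the $k$-analogue (\ref{26}) of (\ref{15}), and then invokes the earlier lemma verbatim (the citation in the proof reads Lemma~\ref{Lemma3}, but the displayed sequences are exactly those of Lemma~\ref{Lemma5}, which is evidently what is meant). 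In other words, the paper treats $k$ itself as the growth variable, so the matched index enters $c_{\alpha_{2}}$ in the pure form $\alpha m^{b}/\ln m$ and the Lemma~\ref{Lemma5} algebra applies unchanged; this silently sidesteps precisely the $k_{m_{n}}$-versus-$k_{n}$ question you identify. Your route A resolves that question by observing that $c_{\alpha}$ depends on $n$ only through the common quantization $k=k_{n}$, whence (\ref{15}) collapses to $m_{n}/n=\gamma(\alpha_{2}/\alpha_{1})k^{b(\alpha_{2})-b(\alpha_{1})}(1+o(1))\rightarrow\infty$ and $c_{\alpha_{2}}(m_{n})/c_{\alpha_{1}}(n)=(\alpha_{2}/\alpha_{1})k^{b(\alpha_{2})-b(\alpha_{1})}\rightarrow\infty$; note that with this reading the ratio in (\ref{17}) is literally the ratio in (\ref{20}), so your one-line verification of \textbf{C1} already contains the whole lemma. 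Your route even buys something the paper's reduction does not: Lemma~\ref{Lemma5} assumes $b:\mathcal{I}\rightarrow]0,1[$ (its step solving (\ref{15}) for $m_{n}$ uses $1-b(\alpha_{2})>0$), whereas Lemma~\ref{Lemma6} allows $b:\mathcal{I}\rightarrow]0,\infty[$; the common-$k$ computation never solves for $m_{n}$ in that way and works for all $0<b(\alpha_{1})<b(\alpha_{2})$.

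One caution: your fallback route B is not available as stated. Assumption \textbf{A1} gives only $k_{n}\leq n$ and monotone growth to infinity; to conclude $k_{m_{n}}\geq k_{n}$ you would first need $m_{n}\geq n$, which is essentially what is being proved, and no standing rate condition in the lemma bounds $k_{m_{n}}/k_{n}$ (indeed, for fast-growing $k_{n}$ and $b(\alpha_{2})$ large, equation (\ref{15}) with the re-evaluated cell count need not even admit a solution $m_{n}\rightarrow\infty$). So when writing this up, commit to route A (or to the paper's re-indexing by $k$) and drop route B, or else supply a separate proof that $m_{n}\geq n$ eventually.
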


\begin{proof}
It suffices to apply Lemma \ref{Lemma3} to the sequences
\[
c_{\alpha_{1}}(k)=\frac{\alpha_{1}k^{b(\alpha_{1})}}{\ln k}\text{ \ \ and
\ \ }c_{\alpha_{2}}(m_{k})=\frac{\alpha_{2}m_{k}^{b(\alpha_{2})}}{\ln m_{k}}%
\]
for $m_{k}$ defined by the condition%
\begin{equation}
\frac{m_{k}}{c_{\alpha_{2}}(m_{k})}=\frac{g_{\alpha_{1}}(\Delta_{\alpha_{1}}%
)}{g_{\alpha_{2}}(\Delta_{\alpha_{2}})}.\frac{k}{c_{\alpha_{1}}(k)}\left(
1+o(1)\right)  \text{ \ \ (cf. (\ref{15})).} \label{26}%
\end{equation}

\end{proof}

\begin{example}
\label{Example4}Let assumptions of Definition~\ref{Def3} hold for $\alpha
_{1}=1$\ and $\alpha_{2}=\alpha>1,$ and let
\begin{equation}
{\frac{k^{b(\alpha)+1}\ln n}{n}}\longrightarrow0\quad\mbox{for}\ \ \ b(\alpha
)=(\alpha-1)/\alpha. \label{27}%
\end{equation}
By \cite[Eq. 51, 76 and 79]{Harremoes2008} and (\ref{27}) the sequences%
\begin{equation}
c_{1}(n)=1\quad\mbox{and}\quad c_{\alpha}(n)={\frac{\alpha k^{b(\alpha)}}{\ln
k}} \label{28}%
\end{equation}
generate the Bahadur functions
\begin{equation}
g_{1}(\Delta)=\Delta\quad\mbox{and}\quad g_{\alpha}(\Delta)=\left(
\alpha(\alpha-1)\,\Delta\right)  ^{1/\alpha},\text{ \ \ }\Delta>0. \label{29}%
\end{equation}
Here we cannot apply Lemma \ref{Lemma4} since $c_{1}(n)$\ is not special case
of $c_{\alpha}(n)$\ for $\alpha=1.$\ An alternative direct approach can be
based on the observation that (\ref{15}) cannot hold if $\lim\inf_{n}%
m_{n}<\infty.$ In the opposite case $m_{n}\rightarrow\infty$ obviously implies%
\[
c_{\alpha/1}\overset{def}{=}\lim\nolimits_{n}{\frac{c_{\alpha}(m_{n})}%
{c_{1}(n)}}=\infty
\]
so that \textbf{C1} holds with $\bar{c}_{\alpha_{2}/\alpha_{1}}\equiv
c_{\alpha/1}=\infty$. Hence Lemma 1 implies that the Bahadur efficiency
$\mbox{BE}\left(  \widehat{D}_{1,n}\,;\,\widehat{D}_{\alpha,n}\right)
=\infty$ obtained previously by Harremo\"{e}s and Vajda \cite[Eq.
81]{Harremoes2008} coincides with the Bahadur efficiency of $\widehat{D}%
_{1,n}$\ with respect to $\widehat{D}_{\alpha,n}$\ in the present precised
sense of (\ref{18}). Under stronger condition on $k$\ than (\ref{27}),
Harremo\"{e}s and Vajda established also the strong consistency of the
statistics $\widehat{D}_{1,n}$\ and $\widehat{D}_{\alpha,n}$. One can verify
that (\ref{29}) are strongly Bahadur functions so that \textbf{C2} holds as
well. Hence, as argued by Lemma 3, we deal here with the Bahadur efficiency in
the strong sense.$\medskip$
\end{example}

\begin{example}
\label{Example5}Let assumptions of Definition~\ref{Def3} hold for $\alpha
_{1}>1$\ and let the function $b(\alpha)$\ be defined by (\ref{27}) for all
$\alpha\geq1$. Harremo\"{e}s and Vajda (2008) proved that if the sequence
$k$\ satisfies the condition (\ref{27}) with $\alpha=\alpha_{2}$\ then for all
$\alpha\in\{\alpha_{1},\alpha_{2}\}$\ the function $g_{\alpha}(\Delta)$ given
by the second formula in (\ref{29}) is Bahadur function of the statistics
$\widehat{D}_{\alpha,n}$ generated by the sequences $c_{\alpha}(n)$ from the
second formula in (\ref{28}). Thus in this case the assumptions of Lemma
\ref{Lemma4} hold. From Lemmas \ref{Lemma6} and \ref{Lemma3} we conclude that
the Bahadur efficiency
\[
\mbox{BE}\left(  \widehat{D}_{\alpha_{1},n}\,;\,\widehat{D}_{\alpha_{2}%
,n}\right)  =\infty\quad\mbox{for all}\ \text{\ \ }0<\alpha_{1}<\alpha
_{2}<\infty
\]
obtained in \cite[Eq. 81]{Harremoes2008} coincides with the Bahadur efficiency
in the present precise sense. Similarly as in the previous example, we can
arrive to the conclusion that this is the Bahadur efficiency in the strong
sense.$\medskip$

\textbf{Acknowledgement.} This research was supported by the European Network
of Excellence and by the GA\v{C}R grant 202/10/0618.
\end{example}

\nocite{Liese1987}\nocite{Liese2006}\nocite{Quine1985}\nocite{Morris1975}%
\nocite{Lehman2005}\nocite{Beirlant2001}\nocite{Csiszar1963}%
\nocite{Gyorfi2000}\nocite{Read1988}\nocite{Gyorfi2002}\nocite{Renyi1961}

\bibliographystyle{ieeetr}
\bibliography{database1}

\end{document}